\date{\empty}
\numberwithin{equation}{section} \theoremstyle{plain}
\newtheorem*{thm*}{Main Theorem}
\newtheorem{theorem}{Theorem}[section]
\newtheorem{corollary}[theorem]{Corollary}
\newtheorem*{corollary*}{Corollary}
\newtheorem*{claim*}{Claim}
\newtheorem{lemma}[theorem]{Lemma}
\newtheorem*{lemma*}{Lemma}
\newtheorem*{proposition*}{Proposition}
\newtheorem{remark}[theorem]{Remark}
\newtheorem*{remark*}{Remark}
\newtheorem{example}[theorem]{Example}
\newtheorem*{example*}{Example}
\newtheorem*{question*}{Question}
\newtheorem{definition}[theorem]{Definition}
\newtheorem*{definition*}{Definition}
\newtheorem*{acknowledgements*}{ACKNOWLEDGEMENTS}
\begin{document}
\begin{center}
{\large  \bf The core-EP inverse: A numerical approach for its acute perturbation }
\vspace{0.4cm} {\small \bf Mengmeng  Zhou},\footnote{Mengmeng Zhou (E-mail: mmz9209@163.com): College of Information Engineering, Nanjing Xiaozhuang University, Nanjing 211171, China}
\vspace{0.4cm} {\small \bf Jianlong Chen},\footnote{ Jianlong Chen (E-mail: jlchen@seu.edu.cn): School of Mathematics, Southeast University, Nanjing 210096, China}
\vspace{0.4cm} {\small \bf N\'{e}stor Thome}\footnote{N\'{e}stor Thome (Corresponding author Email: njthome@mat.upv.es): Instituto Universitario de Matem\'{a}tica Multidisciplinar, Universitat Polit\`{e}cnica de Val\`{e}ncia, Valencia 46022, Spain}

\end{center}

\bigskip

{ \bf  Abstract:}  \leftskip0truemm\rightskip0truemm 
This paper studies the concept of stable perturbation $B\in\mathbb{C}^{n\times n}$ for the core-EP inverse of a matrix $A\in\mathbb{C}^{n\times n}$ with index $k$. 
For a given stable perturbation $B$ of $A$,  explicit expressions of its core-EP inverse $B^{\scriptsize\textcircled{\tiny $\dagger$}}$ and its projection at zero $B^\pi$ are presented. 
Then, the perturbation bounds of $\parallel B^{\scriptsize\textcircled{\tiny $\dagger$}}-A^{\scriptsize\textcircled{\tiny $\dagger$}}\parallel/\parallel A^{\scriptsize\textcircled{\tiny $\dagger$}}\parallel$ and $\parallel B^{\pi}-A^{\pi}\parallel$ are given provided that $B$ is a stable perturbation of $A$. 
In addition, we investigate the concept of acute perturbation of $A$. We give a perturbation analysis with respect to core-EP inverses. We provide a condition under which the acute perturbation coincides with the stable perturbation for core-EP inverses.


{ \textbf{Key words:}} Core inverse; Core-EP inverse; Stable perturbation; Acute perturbation.\\
{ \textbf{AMS subject classifications:}}  15A09, 65F20.
 \bigskip

\section {\bf Introduction}
The set of all $m\times n$ complex matrices will be denoted by $\mathbb{C}^{m\times n}$. For $A\in\mathbb{C}^{m\times n}$, the notations $R(A)$, $N(A)$, ${\rm rk}(A)$ and $A^{\ast}$ stand for the range space, the null space, the rank and the conjugate transpose of $A$, respectively. The identity matrix of an appropriate order is denoted by $I$. The symbols $\parallel \cdot\parallel$ and $\rho(\cdot)$ denote the spectral norm and spectral radius, respectively.

The (unique) matrix $A^{\dagger}\in\mathbb{C}^{n\times m}$ satisfying the following four equations \textcolor[rgb]{0.00,0.00,1.00}{\cite{P}}
$$AA^{\dagger}A=A,~A^{\dagger}AA^{\dagger}=A^{\dagger},~(AA^{\dagger})^{\ast}=AA^{\dagger},~(A^{\dagger}A)^{\ast}=A^{\dagger}A,$$
is called the Moore-Penrose inverse of $A\in\mathbb{C}^{m\times n}$. The (unique) matrix $A^{D}\in\mathbb{C}^{n\times n}$ is called the Drazin inverse of $A\in\mathbb{C}^{n\times n}$ if it satisfies the following three equations \textcolor[rgb]{0.00,0.00,1.00}{\cite{D}}
$$A^{k+1}A^{D}=A^{k},~A^{D}AA^{D}=A^{D},~AA^{D}=A^{D}A,~{\rm for ~some~integer~}k.$$
If $A$ is singular and $k$ is the smallest positive integer such that ${\rm rk}(A^{k+1})={\rm rk}(A^{k})$ holds, then $k$ is called the index of $A\in\mathbb{C}^{n\times n}$ and denoted by ${\rm ind}(A).$ When ${\rm ind}(A)=1,$ the Drazin inverse is called the group inverse and denoted by $A^{\#}$.

The core-EP inverse of $A\in\mathbb{C}^{n\times n}$ with ${\rm ind}(A)=k$ is the unique matrix $A^{\scriptsize\textcircled{\tiny $\dagger$}}\in\mathbb{C}^{n\times n}$ satisfying the following three equations \textcolor[rgb]{0.00,0.00,1.00}{ \cite{GC, MPM}}
$$A^{\scriptsize\textcircled{\tiny $\dagger$}}A^{k+1}=A^{k},~A(A^{\scriptsize\textcircled{\tiny $\dagger$}})^{2}=A^{\scriptsize\textcircled{\tiny $\dagger$}},~(AA^{\scriptsize\textcircled{\tiny $\dagger$}})^{\ast}=AA^{\scriptsize\textcircled{\tiny $\dagger$}}.$$
 We denote $A^{\pi}:=I-AA^{\scriptsize\textcircled{\tiny $\dagger$}}$. By definition of the core-EP inverse, it is known that $(A^{\pi})^{2}=A^{\pi}=(A^{\pi})^{\ast}$. That is, $A^{\pi}$ is an orthogonal projector. 
If $k=1$, then the core-EP inverse is reduced to the core inverse of $A$ and denoted by $A^{\scriptsize\textcircled{\tiny \#}}$ \cite{BT, XCZ}, i.e., the core inverse of $A\in\mathbb{C}^{n\times n}$ with  ${\rm ind}(A)=1$ is the unique matrix $A^{\scriptsize\textcircled{\tiny \#}}\in\mathbb{C}^{n\times n}$ satisfying the following three equations
$$A^{\scriptsize\textcircled{\tiny \#}}A^{2}=A,~A(A^{\scriptsize\textcircled{\tiny \#}})^{2}=A^{\scriptsize\textcircled{\tiny \#}},~(AA^{\scriptsize\textcircled{\tiny \#}})^{\ast}=AA^{\scriptsize\textcircled{\tiny \#}}.$$
Some publications related to the core-EP inverse are \cite{FLT, MO, MO1, MO2, W, ZC, ZCLW}.

In 1973, Wedin \cite{W0} introduced the concept of acute perturbation of the Moore-Penrose inverse. Let $A,E\in\mathbb{C}^{m\times n}$ and $B=A+E$. The matrix $B$ is an acute perturbation with respect to the Moore-Penrose inverse if
$$\parallel BB^{\dagger}-AA^{\dagger}\parallel<1~{\rm and}~\parallel B^{\dagger}B-A^{\dagger}A\parallel<1,$$
in which case, $A$ and $B$ are acute. In 1990, Stewart et al. \cite{SS} proved that $\parallel BB^{\dagger}-AA^{\dagger}\parallel<1$ if and only if
$$R(A)\cap N(B^{\ast})=\{0\}~{\rm and}~R(B)\cap N(A^{\ast})=\{0\}.$$

Many papers have focused on investigating explicit expressions for the Drazin inverse, providing the related norm upper bounds based on these various formulas and giving the corresponding error estimations \cite{CKW, CRV, K, LW, MG, RW, W2, W3, WW, YD, ZW}. Let $A,B\in\mathbb{C}^{n\times n}$ with ${\rm ind}(A)=k$ and ${\rm ind}(B)=s$. Xu et al. \cite{XSW} defined that $B$ is a stable perturbation of $A$ with respect to the Drazin inverse if $B$ satisfies the condition $(C_{s})$: $R(A^{k})\cap N(B^{s})=\{0\}$ and $R(B^{s})\cap N(A^{k})=\{0\}$.  In \cite{W1}, Wei introduced the concept of acute perturbation with respect to the group inverse. That is, $B$ is an acute perturbation of $A$ with respect to the group inverse if $\parallel B-A\parallel$ is small and the spectral radius $\rho(BB^{\#}-AA^{\#})<1$. The author proved that the acute perturbation concept coincides with that of  stable perturbation of the group inverse if condition $(C_{1})$ holds. Furthermore, Qiao et al. \cite{QW} considered the concept of  acute perturbation with respect to the Drazin inverse and oblique projectors (For more details on spectral projectors and generalized inverses see \cite{APST, BG, D1, S}). They presented an example to show that the spectral radius is a better choice than the spectral norm with respect to the Drazin inverse (group inverse). They also proved that $B$ is an acute perturbation of $A$ if and only if $B$ satisfies the condition $(C_{s})$. Perturbation analysis involving generalized inverses, projections, their applications to the study of linear systems, and related problems has been studied in the literature from different points of view as we can see in the following references \cite{BMP, XWG}.

Recently, Ma \cite{M} studied optimal perturbation bounds of core inverses. Moreover, Ma et al. \cite{MS} investigated optimal perturbation bounds of  core-EP inverses, generalizing the results in \cite{M}. 
Ma \cite{M1} also pointed out the difficulty involved in investigations on 
stable or acute perturbations for weighted core-EP inverses. For that,  for a given matrix $A\in\mathbb{C}^{m\times n}$, it is necessary to consider, among other things, the influence of the weighted matrix $W\in\mathbb{C}^{n\times m}$ on the index of $AW$ and $WA$, the conditions under which the weighted core-EP inverse satisfies a stable perturbation, and the solution to one of its stable perturbations. Let $A ,B\in\mathbb{C}^{n\times n}$ with ${\rm ind}(A)=k$ and ${\rm ind}(B)=s$. Zhou et al. \cite{ZCT} characterized a class of matrices related to the core-EP inverse under the condition that:
$$(C_{s,\ast})~~R(A^{k})\cap N((B^{s})^{\ast})=\{0\}~{\rm and}~R(B^{s})\cap N((A^{k})^{\ast})=\{0\}.$$
When the condition $(C_{s,\ast})$ holds, they proved that $I+(L_{B}-A)A^{\scriptsize\textcircled{\tiny $\dagger$}}$ is nonsingular and presented the explicit expression of $B^{\scriptsize\textcircled{\tiny $\dagger$}}$, where $L_{B}=B^{2}B^{\scriptsize\textcircled{\tiny $\dagger$}}$. The upper bound of $\parallel B^{\scriptsize\textcircled{\tiny $\dagger$}}-A^{\scriptsize\textcircled{\tiny $\dagger$}}\parallel/\parallel A^{\scriptsize\textcircled{\tiny $\dagger$}}\parallel$ was given under the assumption that $(C_{s,\ast})$ is satisfied and $${\rm max}\{\parallel(L_{B}-A)A^{\scriptsize\textcircled{\tiny $\dagger$}}\parallel, \parallel(A^{\scriptsize\textcircled{\tiny $\dagger$}})^{\ast}(L_{B}-A^{\ast})\parallel\}<\frac{1}{1+\sqrt{\parallel A^{\pi}\parallel}}.$$

Motivated by above discussion, we introduce the notion of the stable perturbation and the acute perturbation with respect to the core-EP inverse (separately) when the condition $(C_{s,\ast})$ is satisfied. Then, we investigate the relationship between them.

The paper is organized as follows. In Section 2, we give some preliminary lemmas. In Section 3, we define the stable perturbation of the core-EP inverse and present equivalent characterization of the stable perturbation according to the results in \cite{ZCT}. Moreover, we obtain new expressions for $B^{\scriptsize\textcircled{\tiny $\dagger$}}$ and $B^{\pi}$ under condition of stable perturbation. In addition, the upper bounds for $\parallel B^{\scriptsize\textcircled{\tiny $\dagger$}}-A^{\scriptsize\textcircled{\tiny $\dagger$}}\parallel/\parallel A^{\scriptsize\textcircled{\tiny $\dagger$}}\parallel$ and $\parallel B^{\pi}-A^{\pi}\parallel$ are obtained, respectively. In Section 4, we introduce the acute perturbation for the core-EP inverse. Then, some characterizations of the acute perturbation are presented. Moreover, sufficient and necessary conditions for the acute perturbation of the core-EP inverse are derived. An numerical example is provided to illustrate the validity of the acute perturbation for the core-EP inverse. In Section 5, we show that the perturbation results in \cite{M, MS} are acute perturbations.

\section{\bf Preliminaries}\label{a}
Let $B\in\mathbb{C}^{n\times n}$ with ${\rm ind}(B)=s$. We write $L_{B}:=B^{2}B^{\scriptsize\textcircled{\tiny $\dagger$}}.$ By \cite{FLT}, we have ${\rm rk}(L_{B})={\rm rk}(B^{s}).$ Next, some auxiliary lemmas are given.
\begin{lemma}\label{a0}\emph{\cite{W}} (Core-EP decomposition) Let $A\in \mathbb{C}^{n\times n}$ with ${\rm ind}(A)=k.$ Then $A$ can be uniquely written as $A=A_{1}+A_{2}$, where
\begin{itemize}
\item [{\rm (i)}]${\rm ind}(A_{1})\leq 1$;
\item [{\rm (ii)}] $A_{2}^{k}=0$;
\item [{\rm (iii)}] $A_{1}^{\ast}A_{2}=A_{2}A_{1}=0$.
\end{itemize}
Moreover, there exists a unitary matrix $U\in \mathbb{C}^{n\times n}$ such that
$$A_{1}=U\left(\begin{matrix}
T&S\\
0&0
\end{matrix}
\right)U^{\ast},~~A_{2}=U\left(\begin{matrix}
0&0\\
0&N
\end{matrix}
\right)U^{\ast},$$
where $T\in \mathbb{C}^{r\times r}$ is nonsingular, $N$ is nilpotent and ${\rm rk}(A^{k})=r.$
\end{lemma}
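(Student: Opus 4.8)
The plan is to establish existence by exploiting that $R(A^{k})$ is an $A$-invariant subspace and triangularizing $A$ with respect to an orthonormal basis adapted to it, and then to prove uniqueness by showing that the three abstract conditions force $R(A_{1})=R(A^{k})$, from which $A_{1}$ and $A_{2}$ are pinned down by $A$ alone.

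For existence, I would first record that ${\rm ind}(A)=k$ gives ${\rm rk}(A^{k+1})={\rm rk}(A^{k})=r$, hence $R(A^{k+1})=R(A^{k})$. Since $R(A^{k+1})=AR(A^{k})$, the subspace $V:=R(A^{k})$ is $A$-invariant and $A$ maps $V$ onto itself. I then choose a unitary $U=[\,U_{1}\ U_{2}\,]$ whose first $r$ columns form an orthonormal basis of $V$. Because $AU_{1}=U_{1}T$ for some $T\in\mathbb{C}^{r\times r}$ and $U_{2}^{\ast}U_{1}=0$, the matrix $U^{\ast}AU=\begin{pmatrix}T&S\\0&N\end{pmatrix}$ is block upper triangular. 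The restriction of $A$ to $V$ is represented by $T$ and is onto $V$, so $T$ is a surjective endomorphism of a finite-dimensional space, hence nonsingular. For nilpotency I would compute $U^{\ast}A^{k}U=(U^{\ast}AU)^{k}=\begin{pmatrix}T^{k}&\star\\0&N^{k}\end{pmatrix}$ and note that in the $U$-coordinates $R(A^{k})$ is the span of the first $r$ standard basis vectors, so the bottom $n-r$ rows of $U^{\ast}A^{k}U$ vanish, forcing $N^{k}=0$. Setting $A_{1}=U\begin{pmatrix}T&S\\0&0\end{pmatrix}U^{\ast}$ and $A_{2}=U\begin{pmatrix}0&0\\0&N\end{pmatrix}U^{\ast}$ yields $A=A_{1}+A_{2}$, and properties (i)--(iii) follow from direct block multiplication: $A_{2}^{k}=U\,{\rm diag}(0,N^{k})\,U^{\ast}=0$ gives (ii), the block products for $A_{2}A_{1}$ and $A_{1}^{\ast}A_{2}$ collapse to zero giving (iii), and the nonsingularity of $T$ makes ${\rm rk}(A_{1}^{2})={\rm rk}(A_{1})=r$, giving (i).

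The main work is uniqueness, and I expect it to be the most delicate step, since it requires extracting decomposition-independent information from the abstract hypotheses. Suppose $A=A_{1}+A_{2}$ is any splitting satisfying (i)--(iii). From $A_{2}A_{1}=0$ I get $R(A_{1})\subseteq N(A_{2})$, so $A_{2}$ vanishes on $R(A_{1})$ and $A$ agrees with $A_{1}$ there; since ${\rm ind}(A_{1})\le1$ the restriction $A_{1}|_{R(A_{1})}$ is bijective, hence so is $A|_{R(A_{1})}$, making $R(A_{1})$ an $A$-invariant subspace on which every power of $A$ is invertible, whence $R(A_{1})=A^{k}R(A_{1})\subseteq R(A^{k})$. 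For the reverse inclusion I would expand $A^{k}=(A_{1}+A_{2})^{k}$ and use $A_{2}A_{1}=0$ to annihilate every word containing $A_{2}A_{1}$, leaving $A^{k}=\sum_{i=1}^{k}A_{1}^{i}A_{2}^{k-i}$; each surviving term carries a leading factor $A_{1}^{i}$ with $i\ge1$, so $R(A^{k})\subseteq R(A_{1})$ and therefore $R(A_{1})=R(A^{k})$. Finally, $A_{1}^{\ast}A_{2}=0$ gives $R(A_{2})\subseteq R(A_{1})^{\perp}=R(A^{k})^{\perp}$; writing $P$ for the orthogonal projector onto $R(A^{k})$, I obtain $PA_{1}=A_{1}$ and $PA_{2}=0$, so $A_{1}=PA$ and $A_{2}=(I-P)A$ are determined by $A$, which proves uniqueness. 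The crux is precisely the identity $R(A_{1})=R(A^{k})$: it is what converts the three algebraic conditions into the concrete formula $A_{1}=PA$, and both the index-one hypothesis (to invert $A_{1}$ on its range) and the combinatorial collapse of $(A_{1}+A_{2})^{k}$ (to bound $R(A^{k})$) are essential to it.
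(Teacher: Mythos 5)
Your proof is correct and complete. One thing to be aware of: the paper itself gives no proof of this lemma --- it is quoted verbatim from Wang \cite{W}, so there is no in-paper argument to compare against; judged against Wang's original proof, yours is essentially the same construction. For existence, Wang obtains the block form from a Schur decomposition with the zero eigenvalues ordered last, which is exactly your triangularization adapted to the $A$-invariant subspace $R(A^{k})$ (your observation that $A$ maps $R(A^{k})$ onto itself, forcing $T$ nonsingular, and that the vanishing of the bottom rows of $U^{\ast}A^{k}U$ forces $N^{k}=0$, is the coordinate-free version of reading off the Schur form). For uniqueness, the standard route is likewise to show $R(A_{1})=R(A^{k})$ and conclude $A_{1}=PA$, $A_{2}=(I-P)A$ with $P=A^{k}(A^{k})^{\dagger}$ the orthogonal projector onto $R(A^{k})$; your two ingredients are exactly right, and the combinatorial collapse $(A_{1}+A_{2})^{k}=\sum_{i=0}^{k}A_{1}^{i}A_{2}^{k-i}$ (the $i=0$ term dying by $A_{2}^{k}=0$) is a clean, self-contained way to get $R(A^{k})\subseteq R(A_{1})$, while the index-one hypothesis correctly supplies the bijectivity of $A_{1}$ on its range needed for $R(A_{1})\subseteq R(A^{k})$. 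No gaps.
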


For $A\in \mathbb{C}^{n\times n}$ being as in Lemma \ref{a0}, it is known \cite{W} that
$$A^{\scriptsize\textcircled{\tiny $\dagger$}}=U\left(\begin{matrix}
T^{-1}&0\\
0&0
\end{matrix}
\right)U^{\ast},~~A^{\pi}=U\left(\begin{matrix}
0&0\\
0&I
\end{matrix}
\right)U^{\ast}.$$

\begin{lemma}\label{a00}\emph{\cite{GC}} Let $A\in \mathbb{C}^{n\times n}$ with ${\rm ind}(A)=k.$ Then the following statements hold:
\begin{itemize}
\item [{\rm (i)}] $AA^{\scriptsize\textcircled{\tiny $\dagger$}}=A^{m}(A^{\scriptsize\textcircled{\tiny $\dagger$}})^{m}$, for arbitrary positive integer $m$;
\item [{\rm (ii)}] $A^{\scriptsize\textcircled{\tiny $\dagger$}}=A^{D}A^{k}(A^{k})^{\dagger}$;
\item [{\rm (iii)}] $(A^{\scriptsize\textcircled{\tiny $\dagger$}})^{\scriptsize\textcircled{\tiny $\dagger$}}=(A^{\scriptsize\textcircled{\tiny $\dagger$}})^{\scriptsize\textcircled{\tiny \#}}=A^{2}A^{\scriptsize\textcircled{\tiny $\dagger$}}$;
\item [{\rm (iv)}] $((A^{\scriptsize\textcircled{\tiny $\dagger$}})^{\scriptsize\textcircled{\tiny $\dagger$}})^{\scriptsize\textcircled{\tiny $\dagger$}}=A^{\scriptsize\textcircled{\tiny $\dagger$}}$.
\end{itemize}
\end{lemma}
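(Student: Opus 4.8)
The plan is to reduce every identity to a direct block computation via the core-EP decomposition of Lemma \ref{a0}. Writing $A=U\left(\begin{matrix} T & S\\ 0 & N\end{matrix}\right)U^{\ast}$ with $T$ nonsingular and $N$ nilpotent satisfying $N^{k}=0$ (the latter from $A_{2}^{k}=0$), together with $A^{\scriptsize\textcircled{\tiny $\dagger$}}=U\left(\begin{matrix} T^{-1} & 0\\ 0 & 0\end{matrix}\right)U^{\ast}$, I would first record the block forms of the relevant powers: by induction $A^{m}=U\left(\begin{matrix} T^{m} & S_{m}\\ 0 & N^{m}\end{matrix}\right)U^{\ast}$ and $(A^{\scriptsize\textcircled{\tiny $\dagger$}})^{m}=U\left(\begin{matrix} T^{-m} & 0\\ 0 & 0\end{matrix}\right)U^{\ast}$ for suitable blocks $S_{m}$, and also $AA^{\scriptsize\textcircled{\tiny $\dagger$}}=U\left(\begin{matrix} I & 0\\ 0 & 0\end{matrix}\right)U^{\ast}$. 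With these in hand, part (i) is immediate: multiplying the two block-triangular factors annihilates the $S_{m}$ and $N^{m}$ blocks against $T^{-m}$, leaving $A^{m}(A^{\scriptsize\textcircled{\tiny $\dagger$}})^{m}=U\left(\begin{matrix} I & 0\\ 0 & 0\end{matrix}\right)U^{\ast}=AA^{\scriptsize\textcircled{\tiny $\dagger$}}$.

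For part (ii) I would bring in the Drazin inverse. Since $T$ is nonsingular and $N$ nilpotent, the standard block formula gives $A^{D}=U\left(\begin{matrix} T^{-1} & Z\\ 0 & 0\end{matrix}\right)U^{\ast}$ for a block $Z$ whose precise value is irrelevant. Because $N^{k}=0$ we have $A^{k}=U\left(\begin{matrix} T^{k} & S_{k}\\ 0 & 0\end{matrix}\right)U^{\ast}$, whose last $n-r$ rows vanish while its rank is $r$; hence $R(A^{k})=U(\mathbb{C}^{r}\times\{0\})$ and the orthogonal projector $A^{k}(A^{k})^{\dagger}$ onto $R(A^{k})$ equals $U\left(\begin{matrix} I & 0\\ 0 & 0\end{matrix}\right)U^{\ast}$. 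Multiplying, $A^{D}A^{k}(A^{k})^{\dagger}=U\left(\begin{matrix} T^{-1} & 0\\ 0 & 0\end{matrix}\right)U^{\ast}=A^{\scriptsize\textcircled{\tiny $\dagger$}}$, as required.

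For parts (iii) and (iv) the key structural remark is that $A^{\scriptsize\textcircled{\tiny $\dagger$}}=U\left(\begin{matrix} T^{-1} & 0\\ 0 & 0\end{matrix}\right)U^{\ast}$ has index at most one, because all its powers keep rank $r$; consequently its core-EP inverse coincides with its core inverse, which yields the first equality in (iii). I would then compute $A^{2}A^{\scriptsize\textcircled{\tiny $\dagger$}}=U\left(\begin{matrix} T & 0\\ 0 & 0\end{matrix}\right)U^{\ast}$ and check that this matrix satisfies the three defining equations of the core inverse of $A^{\scriptsize\textcircled{\tiny $\dagger$}}$ (equivalently, read off that the core inverse of the block-diagonal index-one matrix $\left(\begin{matrix} T^{-1} & 0\\ 0 & 0\end{matrix}\right)$ is $\left(\begin{matrix} T & 0\\ 0 & 0\end{matrix}\right)$), proving (iii). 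Finally (iv) follows by iterating: $(A^{\scriptsize\textcircled{\tiny $\dagger$}})^{\scriptsize\textcircled{\tiny $\dagger$}}=U\left(\begin{matrix} T & 0\\ 0 & 0\end{matrix}\right)U^{\ast}$ again has index at most one, so its core-EP inverse equals its core inverse $U\left(\begin{matrix} T^{-1} & 0\\ 0 & 0\end{matrix}\right)U^{\ast}=A^{\scriptsize\textcircled{\tiny $\dagger$}}$.

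The computations are routine bookkeeping; the only two points that need genuine justification are the identification of $A^{k}(A^{k})^{\dagger}$ as the orthogonal projector onto $R(A^{k})=U(\mathbb{C}^{r}\times\{0\})$ in (ii), which rests on the rank-$r$/vanishing-rows structure of $A^{k}$, and the observation that $A^{\scriptsize\textcircled{\tiny $\dagger$}}$ (and then $(A^{\scriptsize\textcircled{\tiny $\dagger$}})^{\scriptsize\textcircled{\tiny $\dagger$}}$) has index at most one, which is what legitimizes replacing the core-EP inverse by the core inverse in (iii)--(iv). I expect this index reduction to be the main conceptual step, after which everything collapses to multiplying $2\times 2$ block matrices.
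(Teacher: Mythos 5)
Your proof is correct, but note that the paper itself offers no proof of this lemma: it is quoted verbatim from \cite{GC}, where the identities are established for pseudo core inverses in an arbitrary ring with involution by purely equational manipulation of the three defining relations (for instance, (i) by induction from $A(A^{\scriptsize\textcircled{\tiny $\dagger$}})^{2}=A^{\scriptsize\textcircled{\tiny $\dagger$}}$ and $A^{\scriptsize\textcircled{\tiny $\dagger$}}A^{k+1}=A^{k}$, and (ii) via Drazin-inverse identities), since no decomposition is available at that level of generality. Your route is genuinely different: you push everything through the core-EP decomposition of Lemma \ref{a0}, and each step checks out --- the block forms of $A^{m}$, $(A^{\scriptsize\textcircled{\tiny $\dagger$}})^{m}$ and $A^{D}$ are right; $A^{k}(A^{k})^{\dagger}$ is indeed the orthogonal projector onto $R(A^{k})=U(\mathbb{C}^{r}\times\{0\})$, which follows from the rank-$r$, vanishing-rows structure you isolate; and the index-reduction remark that ${\rm ind}(A^{\scriptsize\textcircled{\tiny $\dagger$}})\leq 1$ (all powers have rank $r$) correctly legitimizes replacing the core-EP inverse by the core inverse in (iii)--(iv), after which uniqueness of the core inverse lets you identify it by verifying the three defining equations for the candidate $U\left(\begin{smallmatrix} T&0\\ 0&0\end{smallmatrix}\right)U^{\ast}$ (your degenerate case $r=0$, i.e.\ $A$ nilpotent, also goes through trivially with $A^{\scriptsize\textcircled{\tiny $\dagger$}}=0$). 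What the ring-theoretic proof in \cite{GC} buys is generality: it works in $*$-rings with no unitary similarity or spectral structure. What yours buys is transparency and a one-line verification of each item, and it is in fact more consonant with this paper's own methods, since Lemmas \ref{b2} and \ref{b3} and Theorem \ref{b4} all argue through exactly this decomposition; the cost is that the argument is specific to complex matrices.
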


\begin{lemma}\label{a2}\emph{\cite{ZCT}} Let $B_{1}\in\mathbb{C}^{m\times m}$ be nonsingular and let $P\in\mathbb{C}^{m\times n}$ and $Q\in\mathbb{C}^{n\times m}$ be arbitrary matrices. Then, the matrix $W:=\left(\begin{matrix}
B_{1}&B_{1}P\\
QB_{1}&QB_{1}P
\end{matrix}
\right)\in \mathbb{C}^{(m+n)\times (m+n)}$ is core invertible if and only if $I+PQ$ is nonsingular. In this case,
$$W^{\scriptsize\textcircled{\tiny \#}}=\left(\begin{matrix}
((I+Q^{\ast}Q)B_{1}(I+PQ))^{-1}&((I+Q^{\ast}Q)B_{1}(I+PQ))^{-1}Q^{\ast}\\
Q((I+Q^{\ast}Q)B_{1}(I+PQ))^{-1}&Q((I+Q^{\ast}Q)B_{1}(I+PQ))^{-1}Q^{\ast}
\end{matrix}
\right),$$
\begin{eqnarray*}
  WW^{\scriptsize\textcircled{\tiny \#}} &=& \left(\begin{matrix}
(I+Q^{\ast}Q)^{-1}&(I+Q^{\ast}Q)^{-1}Q^{\ast}\\
Q(I+Q^{\ast}Q)^{-1}&Q(I+Q^{\ast}Q)^{-1}Q^{\ast}
\end{matrix}
\right),
\end{eqnarray*}
and
\begin{eqnarray*}
 W^{\pi}  &=&\left(\begin{matrix}
I-(I+Q^{\ast}Q)^{-1}&-(I+Q^{\ast}Q)^{-1}Q^{\ast}\\
-Q(I+Q^{\ast}Q)^{-1}&I-Q(I+Q^{\ast}Q)^{-1}Q^{\ast}
\end{matrix}\right).
\end{eqnarray*}
\end{lemma}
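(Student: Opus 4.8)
The plan is to exploit the evident block structure of $W$. First I would record the factorization $W=CB_{1}R$, where $C=\left(\begin{matrix} I\\ Q\end{matrix}\right)\in\mathbb{C}^{(m+n)\times m}$ and $R=\left(\begin{matrix} I & P\end{matrix}\right)\in\mathbb{C}^{m\times(m+n)}$. Because the top block of $C$ and the left block of $R$ are identity matrices, $C$ has full column rank $m$ and $R$ has full row rank $m$; together with the nonsingularity of $B_{1}$ this makes $CB_{1}R$ a full-rank factorization of $W$, so ${\rm rk}(W)=m$. The other quantity controlling everything is the ``inner'' product $RC=\left(\begin{matrix} I & P\end{matrix}\right)\left(\begin{matrix} I\\ Q\end{matrix}\right)=I+PQ$.

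For the equivalence I would use that a square matrix is core invertible exactly when its index is at most $1$, i.e. when ${\rm rk}(W^{2})={\rm rk}(W)$. From the factorization, $W^{2}=CB_{1}(RC)B_{1}R=C\,[B_{1}(I+PQ)B_{1}]\,R$, and since $C,R$ keep their full one-sided ranks, ${\rm rk}(W^{2})={\rm rk}(B_{1}(I+PQ)B_{1})={\rm rk}(I+PQ)$. Hence ${\rm rk}(W^{2})={\rm rk}(W)=m$ holds if and only if $I+PQ$ is nonsingular, which settles both directions of the ``if and only if''.

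For the explicit formulas I would use the full-rank-factorization description of the core inverse: if $A=FG$ with $F$ of full column rank, $G$ of full row rank and $GF$ invertible (the index-one situation just isolated), then $A^{\scriptsize\textcircled{\tiny \#}}=F\,[(F^{\ast}F)(GF)]^{-1}F^{\ast}$. I would justify this by checking the three defining equations $XA^{2}=A$, $AX^{2}=X$, $(AX)^{\ast}=AX$ directly; the computation collapses neatly because $AX=F(F^{\ast}F)^{-1}F^{\ast}$ is the orthogonal projector onto $R(F)=R(A)$. Taking $F=C$, $G=B_{1}R$ gives $F^{\ast}F=I+Q^{\ast}Q$ and $GF=B_{1}(I+PQ)$, hence $(F^{\ast}F)(GF)=(I+Q^{\ast}Q)B_{1}(I+PQ)$; substituting and expanding $C[\,\cdots\,]^{-1}C^{\ast}$ into its four blocks reproduces the stated $W^{\scriptsize\textcircled{\tiny \#}}$. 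The projector $WW^{\scriptsize\textcircled{\tiny \#}}=P_{R(W)}=C(C^{\ast}C)^{-1}C^{\ast}=C(I+Q^{\ast}Q)^{-1}C^{\ast}$ yields the second block matrix at once, and $W^{\pi}=I-WW^{\scriptsize\textcircled{\tiny \#}}$ yields the third.

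I do not expect a serious obstacle here; the one point needing care is to make the core-inverse full-rank-factorization formula available and to tie its hypothesis to ours, namely that $GF=B_{1}(I+PQ)$ is invertible precisely when $I+PQ$ is, which is exactly the condition produced by the rank count. If one prefers to remain wholly self-contained, the alternative is to take the three displayed block matrices as given and verify the defining equations of the core inverse by direct block multiplication; then the only ``hard'' part is the bookkeeping, repeatedly simplifying products such as $(I+Q^{\ast}Q)^{-1}Q^{\ast}$ and $(I+PQ)$ against $Q$ and $P$, which is routine but lengthy.
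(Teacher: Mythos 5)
Your proposal is correct. Note, however, that this paper does not actually prove Lemma \ref{a2}: it is imported verbatim from \cite{ZCT}, so there is no in-house proof to compare against, and your argument should be judged as a self-contained replacement. As such it works. The factorization $W=CB_{1}R$ with $C=\left(\begin{matrix} I\\ Q\end{matrix}\right)$, $R=\left(\begin{matrix} I & P\end{matrix}\right)$ is a genuine full-rank factorization (${\rm rk}(W)=m$), and $W^{2}=CB_{1}(I+PQ)B_{1}R$ gives ${\rm rk}(W^{2})={\rm rk}(I+PQ)$, so the equivalence follows from the standard fact that a square matrix is core invertible if and only if its index is at most $1$ --- cite \cite{BT} (or \cite{XCZ}) for that, since it is the one external ingredient your rank count relies on. Your full-rank-factorization formula is also correct and checks out cleanly: with $A=FG$, $GF$ invertible, one has $AX=F(F^{\ast}F)^{-1}F^{\ast}$ (Hermitian, so the third defining equation holds), $XA^{2}=F(GF)^{-1}(F^{\ast}F)^{-1}(F^{\ast}F)(GF)G=FG=A$, and $AX^{2}=X$, so uniqueness of the core inverse finishes it; alternatively it drops out of the known identities $A^{\#}=F(GF)^{-2}G$ and $AA^{\dagger}=F(F^{\ast}F)^{-1}F^{\ast}$ via $A^{\scriptsize\textcircled{\tiny \#}}=A^{\#}AA^{\dagger}=F\bigl[(F^{\ast}F)(GF)\bigr]^{-1}F^{\ast}$. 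Specializing $F=C$, $G=B_{1}R$ gives $F^{\ast}F=I+Q^{\ast}Q$, $GF=B_{1}(I+PQ)$, and expanding $C\bigl[(I+Q^{\ast}Q)B_{1}(I+PQ)\bigr]^{-1}C^{\ast}$ and $C(I+Q^{\ast}Q)^{-1}C^{\ast}$ into blocks reproduces all three displayed matrices, with $W^{\pi}=I-WW^{\scriptsize\textcircled{\tiny \#}}$ immediate. This route is arguably tidier than a brute-force verification of the block formulas, since the single invertibility hypothesis $GF=B_{1}(I+PQ)$ nonsingular simultaneously drives the equivalence and the formulas; just make the two cited facts explicit rather than leaving them as asides.
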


\section{Stable perturbation of the core-EP inverse}\label{b}
In this section, let $A\in\mathbb{C}^{n\times n}$ with ${\rm ind}(A)=k>0$. For any $B\in\mathbb{C}^{n\times n}$ with ${\rm ind}(B)=s,$ let $L_{B}=B^{2}B^{\scriptsize\textcircled{\tiny $\dagger$}}$. If $B\in\mathbb{C}^{n\times n}$ satisfies condition $(C_{s,\ast})$, by \cite[Lemma 3.1]{ZCT}, it is known that $I+(A^{\scriptsize\textcircled{\tiny $\dagger$}})^{\ast}(L_{B}-A^{\ast})$ and $I+(L_{B}-A)A^{\scriptsize\textcircled{\tiny $\dagger$}}$ are nonsingular.

Firstly, we give the definition and equivalent conditions of the stable perturbation
\begin{definition}\label{b1} Let $A\in\mathbb{C}^{n\times n}$. A matrix $B\in\mathbb{C}^{n\times n}$ is said to be a stable perturbation of $A$ with respect to the core-EP inverse (in short, stable perturbation) if $I-(B^{\pi}-A^{\pi})^{2}$ is nonsingular.
\end{definition}

Now, we present a characterization of stable perturbations.
\begin{lemma}\label{b2} Let $A\in \mathbb{C}^{n\times n}$ with ${\rm ind}(A)=k>0.$ Then the following conditions on $B\in \mathbb{C}^{n\times n}$ with ${\rm ind}(B)=s$ are equivalent:
\begin{itemize}
\item [{\rm (i)}] $B$ is a stable perturbation of $A$;
\item [{\rm (ii)}] $L_{B}$ is a stable perturbation of $A$;
\item [{\rm (iii)}] ${\rm rk}(B^{s})={\rm rk}(A^{k})={\rm rk}((A^{k})^{\ast}L_{B}A^{k})$;
\item [{\rm (iv)}] $B$ satisfies condition $(C_{s,\ast})$;
\item [{\rm (v)}] $I+(L_{B}-A)A^{\scriptsize\textcircled{\tiny $\dagger$}}$ is nonsingular, $A^{\pi}(I+(L_{B}-A)A^{\scriptsize\textcircled{\tiny $\dagger$}})^{-1}L_{B}=0$;
\item [{\rm (vi)}] If $A$ is written as in Lemma \ref{a0}, then $L_{B}$ has the following matrix form:
$$L_{B}=U\left(\begin{matrix}
B_{1}&B_{1}P\\
QB_{1}&QB_{1}P
\end{matrix}
\right)U^{\ast},$$ for some matrices $B_{1}$, $P$ and $Q$ such that $B_{1}$ and $I+PQ$ are nonsingular;
\item [{\rm (vii)}] ${\rm rk}(B^{s})={\rm rk}(A^{k})$, $I+(L_{B}-A)A^{\scriptsize\textcircled{\tiny $\dagger$}}$ is nonsingular.
\end{itemize}
\end{lemma}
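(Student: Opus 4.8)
The plan is to funnel the four rank/nonsingularity statements (iii), (v), (vi), (vii) into a single block condition read off from the core-EP decomposition of $A$, to dispose of (i)$\Leftrightarrow$(ii) by a one-line identity, and to treat the genuinely new condition (i) through the spectral theory of a difference of two orthogonal projectors. First I would record that $L_{B}=(B^{\scriptsize\textcircled{\tiny $\dagger$}})^{\scriptsize\textcircled{\tiny $\dagger$}}$ and $L_{B}^{\scriptsize\textcircled{\tiny $\dagger$}}=B^{\scriptsize\textcircled{\tiny $\dagger$}}$ by Lemma~\ref{a00}(iii)--(iv); in particular $L_{B}$ has index at most $1$, and by Lemma~\ref{a00}(i),
$$L_{B}L_{B}^{\scriptsize\textcircled{\tiny $\dagger$}}=B^{2}B^{\scriptsize\textcircled{\tiny $\dagger$}}B^{\scriptsize\textcircled{\tiny $\dagger$}}=B^{2}(B^{\scriptsize\textcircled{\tiny $\dagger$}})^{2}=BB^{\scriptsize\textcircled{\tiny $\dagger$}},$$
so that $L_{B}^{\pi}=B^{\pi}$. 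Since ``$L_{B}$ is a stable perturbation of $A$'' means $I-(L_{B}^{\pi}-A^{\pi})^{2}$ is nonsingular, this equality makes (i)$\Leftrightarrow$(ii) immediate from Definition~\ref{b1}. Along the way I would also note $R(L_{B})=R(B^{s})$ and ${\rm rk}(L_{B})={\rm rk}(B^{s})$.

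For the new condition I would prove (i)$\Leftrightarrow$(iv) directly. Put $F:=B^{\pi}-A^{\pi}=L_{B}^{\pi}-A^{\pi}$. As a difference of two orthogonal projectors, $F$ is Hermitian with $\parallel F\parallel\le 1$, so $\sigma(F)\subseteq[-1,1]$ and $I-F^{2}$ is nonsingular iff $\pm1\notin\sigma(F)$. Using that for orthogonal projectors $P,Q$ one has $(P-Q)v=v\Leftrightarrow Pv=v,\ Qv=0$ (which follows from $\parallel Pv\parallel^{2}-\parallel Qv\parallel^{2}=\parallel v\parallel^{2}$ together with $\parallel Pv\parallel\le\parallel v\parallel$ and $\parallel Qv\parallel\ge 0$), I would identify
$$\ker(F-I)=R(B^{\pi})\cap N(A^{\pi})=N((B^{s})^{\ast})\cap R(A^{k}),$$
$$\ker(F+I)=N(B^{\pi})\cap R(A^{\pi})=R(B^{s})\cap N((A^{k})^{\ast}),$$
via $R(B^{\pi})=R(B^{s})^{\perp}=N((B^{s})^{\ast})$, $N(A^{\pi})=R(A^{k})$, $R(A^{\pi})=N((A^{k})^{\ast})$ and $N(B^{\pi})=R(B^{s})$. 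Hence $I-F^{2}$ is nonsingular iff both intersections are trivial, i.e.\ iff $(C_{s,\ast})$ holds, which is (iv).

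Next I would set up a common block condition for (iii), (v), (vi), (vii). Writing $A$ as in Lemma~\ref{a0} and $L_{B}=U\left(\begin{matrix}X&Y\\Z&W\end{matrix}\right)U^{\ast}$ with $X\in\mathbb{C}^{r\times r}$, a few direct computations give $I+(L_{B}-A)A^{\scriptsize\textcircled{\tiny $\dagger$}}=U\left(\begin{matrix}XT^{-1}&0\\ZT^{-1}&I\end{matrix}\right)U^{\ast}$, which is nonsingular iff $X$ is nonsingular; with $A^{k}=U\left(\begin{matrix}T^{k}&S_{k}\\0&0\end{matrix}\right)U^{\ast}$ one gets $(A^{k})^{\ast}L_{B}A^{k}=U\left(\begin{matrix}(T^{k})^{\ast}\\S_{k}^{\ast}\end{matrix}\right)X\left(\begin{matrix}T^{k}&S_{k}\end{matrix}\right)U^{\ast}$, whence ${\rm rk}((A^{k})^{\ast}L_{B}A^{k})={\rm rk}(X)$; and when $X$ is nonsingular the Schur complement formula yields ${\rm rk}(L_{B})={\rm rk}(X)+{\rm rk}(W-ZX^{-1}Y)$ together with
$$A^{\pi}\bigl(I+(L_{B}-A)A^{\scriptsize\textcircled{\tiny $\dagger$}}\bigr)^{-1}L_{B}=U\left(\begin{matrix}0&0\\0&W-ZX^{-1}Y\end{matrix}\right)U^{\ast}.$$
Let $(\star)$ denote the condition ``$X$ nonsingular and $W=ZX^{-1}Y$''. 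Using ${\rm rk}(L_{B})={\rm rk}(B^{s})$ and ${\rm rk}(A^{k})=r$, each of (iii), (v), (vi), (vii) becomes equivalent to $(\star)$; for (vi) one writes $L_{B}=U\left(\begin{matrix}I\\ZX^{-1}\end{matrix}\right)X\left(\begin{matrix}I&X^{-1}Y\end{matrix}\right)U^{\ast}$ and invokes Lemma~\ref{a2}, the hypothesis that $I+PQ$ be nonsingular being automatic because $L_{B}$ is core invertible.

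It then remains to show (iv)$\Leftrightarrow(\star)$, and this is where I expect the main obstacle to lie. The implication $(\star)\Rightarrow(C_{s,\ast})$ is transparent from the rank-$r$ factorization $L_{B}=U\left(\begin{matrix}I\\ZX^{-1}\end{matrix}\right)\left(\begin{matrix}X&Y\end{matrix}\right)U^{\ast}$, which makes both intersection conditions visible. The reverse implication is delicate: in block form $(C_{s,\ast})$ says only that ${\rm rk}\left(\begin{matrix}X&Y\end{matrix}\right)=r$ and that the rows of $\left(\begin{matrix}Z&W\end{matrix}\right)$ lie in the row space of $\left(\begin{matrix}X&Y\end{matrix}\right)$, and this by itself does \emph{not} force the $(1,1)$ block $X$ to be nonsingular. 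The invertibility of $X$ must be extracted from the nonsingularity of $I+(L_{B}-A)A^{\scriptsize\textcircled{\tiny $\dagger$}}$, which holds under $(C_{s,\ast})$ by \cite[Lemma~3.1]{ZCT}; this is precisely the step that silently uses the range-symmetry (EP-ness) of $L_{B}$, and it is the point I would treat most carefully. Once $X$ is known to be nonsingular, the row-space containment immediately gives $W=ZX^{-1}Y$, which is $(\star)$, completing the full chain of equivalences.
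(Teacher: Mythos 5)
Your proposal is correct, but it takes a genuinely different --- and much more self-contained --- route than the paper. The paper's entire proof consists of your opening observation (the identity $L_{B}L_{B}^{\scriptsize\textcircled{\tiny $\dagger$}}=BB^{\scriptsize\textcircled{\tiny $\dagger$}}$, hence $L_{B}^{\pi}=B^{\pi}$, which settles (i)$\Leftrightarrow$(ii)) followed by a one-line citation of \cite[Theorems 4.1 and 4.2]{ZCT} for the remaining chain (i)$\Leftrightarrow$(iii)$\Leftrightarrow$(iv)$\Leftrightarrow$(v)$\Leftrightarrow$(vi)$\Leftrightarrow$(vii); you instead reprove that imported material. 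Your two main devices do not appear in this paper at all: the Hermitian-projector argument identifying $\ker(B^{\pi}-A^{\pi}\mp I)$ with the two intersections in $(C_{s,\ast})$, which explains directly why nonsingularity of $I-(B^{\pi}-A^{\pi})^{2}$ in Definition \ref{b1} is exactly condition (iv) (I checked the key step: $(P-Q)v=v$ forces $Pv=v$, $Qv=0$ for orthogonal projectors, and the identifications $N(B^{\pi})=R(B^{s})$, $R(B^{\pi})=N((B^{s})^{\ast})$, $N(A^{\pi})=R(A^{k})$, $R(A^{\pi})=N((A^{k})^{\ast})$ are all valid); and the reduction of (iii), (v), (vi), (vii) to the single block condition $(\star)$ ($X$ invertible and $W=ZX^{-1}Y$) read off from the core-EP decomposition of $A$. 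The block computations are correct --- the formula for $I+E_{B}A^{\scriptsize\textcircled{\tiny $\dagger$}}$ and its inverse, ${\rm rk}((A^{k})^{\ast}L_{B}A^{k})={\rm rk}(X)$, the Schur-complement rank identity, and the expression for $A^{\pi}(I+E_{B}A^{\scriptsize\textcircled{\tiny $\dagger$}})^{-1}L_{B}$ --- and your appeal to Lemma \ref{a2} to get $I+PQ$ nonsingular ``for free'' is legitimate, since $L_{B}=(B^{\scriptsize\textcircled{\tiny $\dagger$}})^{\scriptsize\textcircled{\tiny $\dagger$}}$ is EP of index at most one and hence core invertible. Your single borrowed step --- $(C_{s,\ast})$ implies $I+E_{B}A^{\scriptsize\textcircled{\tiny $\dagger$}}$ nonsingular, via \cite[Lemma 3.1]{ZCT} --- is unobjectionable here, because the paper quotes exactly this fact in the preamble of Section \ref{b}, and you correctly flag that the block translation of $(C_{s,\ast})$ alone (full row rank of $\left(\begin{smallmatrix}X&Y\end{smallmatrix}\right)$ plus row-space containment of $\left(\begin{smallmatrix}Z&W\end{smallmatrix}\right)$) does not by itself force $X$ invertible, so some such input is genuinely needed. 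As for what each approach buys: the paper's proof is short because all the work lives in \cite{ZCT}, whereas yours makes the lemma essentially readable on its own and isolates precisely where the EP-ness of $L_{B}$ enters; indeed your factorization $L_{B}=U\left(\begin{smallmatrix}I\\ZX^{-1}\end{smallmatrix}\right)\left(\begin{smallmatrix}X&Y\end{smallmatrix}\right)U^{\ast}$ even suggests how to remove the residual citation, since comparing $R(L_{B})$ with $R(L_{B}^{\ast})$ for the EP matrix $L_{B}$ under the two block conditions forces $Y^{\ast}=KX^{\ast}$ with $\left(\begin{smallmatrix}Z&W\end{smallmatrix}\right)=K\left(\begin{smallmatrix}X&Y\end{smallmatrix}\right)$ and then $R(X^{\ast})=\mathbb{C}^{r}$, i.e.\ $X$ invertible outright.
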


\begin{proof} ${\rm (i)}\Leftrightarrow{\rm (ii)}:$ From Lemma \ref{a00} and $L_{B}=B^{2}B^{\scriptsize\textcircled{\tiny $\dagger$}}$, we have $L_{B}(L_{B})^{\scriptsize\textcircled{\tiny $\dagger$}}=BB^{\scriptsize\textcircled{\tiny $\dagger$}}$. So, by Definition \ref{b1}, we obtain $I-(B^{\pi}-A^{\pi})^{2}$ is nonsingular if and only if $I-(L_{B}^{\pi}-A^{\pi})^{2}$ is nonsingular.

By Definition \ref{b1} and \cite[Theorem 4.1 and Theorem 4.2]{ZCT}, we know that ${\rm (i)}\Leftrightarrow{\rm (iii)}\Leftrightarrow{\rm (iv)}\Leftrightarrow{\rm (v)}\Leftrightarrow{\rm (vi)}\Leftrightarrow{\rm (vii)}.$
\end{proof}

\begin{remark}\label{b2223}  According to the proof  of \cite[Theorem 4.2]{ZCT}, we know that $L_{B}$ depends on the choice of $B$ and not on the choice of $B^{\scriptsize\textcircled{\tiny $\dagger$}}$ in Lemma \ref{b2}.
\end{remark}

Next, we give characterizations of the core-EP inverse when $B$ is a stable perturbation of $A$. 
We denote
$$
E_{B}=L_{B}-A, \qquad F_{B}=L_{B}-A^{\ast}, \qquad 
Y=(I+(A^{\scriptsize\textcircled{\tiny $\dagger$}})^{\ast}F_{B})^{-1}(A^{\scriptsize\textcircled{\tiny $\dagger$}})^{\ast}F_{B}A^{\pi}
$$
and
$$
Z=A^{\pi}E_{B}A^{\scriptsize\textcircled{\tiny $\dagger$}}(I+E_{B}A^{\scriptsize\textcircled{\tiny $\dagger$}})^{-1}.
$$
\begin{lemma}\label{b3} Let $A\in\mathbb{C}^{n\times n}$ with ${\rm ind}(A)=k>0$. If $B\in \mathbb{C}^{n\times n}$ is a stable perturbation of $A$ with ${\rm ind}(B)=s$, then $I+YZ$ is nonsingular.
\end{lemma}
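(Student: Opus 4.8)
The plan is to move everything into the fixed unitary basis supplied by the core-EP decomposition and then compute $Y$ and $Z$ as explicit $2\times 2$ block matrices. By Lemma \ref{a0} and the display following it, write
$$
A=U\begin{pmatrix} T & S \\ 0 & N \end{pmatrix}U^{\ast},\qquad
A^{\scriptsize\textcircled{\tiny $\dagger$}}=U\begin{pmatrix} T^{-1} & 0 \\ 0 & 0 \end{pmatrix}U^{\ast},\qquad
A^{\pi}=U\begin{pmatrix} 0 & 0 \\ 0 & I \end{pmatrix}U^{\ast},
$$
with $T$ nonsingular. Since $B$ is a stable perturbation, the equivalence ${\rm (i)}\Leftrightarrow{\rm (vi)}$ of Lemma \ref{b2} lets me take
$$
L_{B}=U\begin{pmatrix} B_{1} & B_{1}P \\ QB_{1} & QB_{1}P \end{pmatrix}U^{\ast},
$$
where $B_{1}$ and $I+PQ$ are nonsingular. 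Then $E_{B}=L_{B}-A$ and $F_{B}=L_{B}-A^{\ast}$ are explicit block matrices (using $A^{\ast}=U\left(\begin{smallmatrix} T^{\ast} & 0 \\ S^{\ast} & N^{\ast}\end{smallmatrix}\right)U^{\ast}$), and all the factors entering $Y$ and $Z$ are visible at once.

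First I would compute $Z$. Right-multiplication by $A^{\scriptsize\textcircled{\tiny $\dagger$}}$ annihilates the second block column, and the decisive simplification is the cancellation $I+(B_{1}-T)T^{-1}=B_{1}T^{-1}$, which shows that $I+E_{B}A^{\scriptsize\textcircled{\tiny $\dagger$}}$ is block lower triangular with invertible diagonal blocks $B_{1}T^{-1}$ and $I$. Inverting this triangular matrix explicitly and pre-multiplying by $A^{\pi}$ (which keeps only the lower-left block), the surviving product telescopes through $B_{1}T^{-1}\cdot TB_{1}^{-1}=I$ to
$$
Z=U\begin{pmatrix} 0 & 0 \\ Q & 0 \end{pmatrix}U^{\ast}.
$$

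The computation of $Y$ is the transpose-side mirror image. Left-multiplication by $(A^{\scriptsize\textcircled{\tiny $\dagger$}})^{\ast}$ annihilates the second block row, the analogous cancellation $I+(T^{\ast})^{-1}(B_{1}-T^{\ast})=(T^{\ast})^{-1}B_{1}$ makes $I+(A^{\scriptsize\textcircled{\tiny $\dagger$}})^{\ast}F_{B}$ block upper triangular with invertible diagonal blocks, and after inverting and post-multiplying by $A^{\pi}$ the surviving off-diagonal term collapses via $B_{1}^{-1}B_{1}=I$ to
$$
Y=U\begin{pmatrix} 0 & P \\ 0 & 0 \end{pmatrix}U^{\ast}.
$$

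Finally, multiplying these two block forms gives $YZ=U\left(\begin{smallmatrix} PQ & 0 \\ 0 & 0\end{smallmatrix}\right)U^{\ast}$, so that
$$
I+YZ=U\begin{pmatrix} I+PQ & 0 \\ 0 & I \end{pmatrix}U^{\ast},
$$
which is nonsingular exactly because $I+PQ$ is nonsingular by Lemma \ref{b2}(vi). I expect the only delicate point to be the two algebraic cancellations $I+(B_{1}-T)T^{-1}=B_{1}T^{-1}$ and $I+(T^{\ast})^{-1}(B_{1}-T^{\ast})=(T^{\ast})^{-1}B_{1}$, since these are what render the triangular blocks invertible and force the off-diagonal contributions in the two inverses to cancel; keeping the block bookkeeping consistent through both inversions is the main thing to watch, but there is no substantive obstacle once $L_{B}$ is placed in the form of Lemma \ref{b2}(vi).
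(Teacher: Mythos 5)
Your proposal is correct and follows essentially the same route as the paper's own proof: both invoke Lemma \ref{b2}\,(i)$\Leftrightarrow$(vi) to put $L_{B}$ in the block form $U\left(\begin{smallmatrix} B_{1} & B_{1}P \\ QB_{1} & QB_{1}P \end{smallmatrix}\right)U^{\ast}$, compute $Z=U\left(\begin{smallmatrix} 0 & 0 \\ Q & 0 \end{smallmatrix}\right)U^{\ast}$ and $Y=U\left(\begin{smallmatrix} 0 & P \\ 0 & 0 \end{smallmatrix}\right)U^{\ast}$ by inverting the triangular matrices $I+E_{B}A^{\scriptsize\textcircled{\tiny $\dagger$}}$ and $I+(A^{\scriptsize\textcircled{\tiny $\dagger$}})^{\ast}F_{B}$, and conclude from $I+YZ=U\left(\begin{smallmatrix} I+PQ & 0 \\ 0 & I \end{smallmatrix}\right)U^{\ast}$. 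All of your intermediate block computations, including the two cancellations you flag as delicate, match the paper's displayed formulas exactly.
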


\begin{proof} Suppose that $A$ is written as in Lemma \ref{a0}. By Lemma \ref{b2} ${\rm (i)}$ and ${\rm (vi)}$, we have $L_{B}=U\left(\begin{matrix}
B_{1}&B_{1}P\\
QB_{1}&QB_{1}P
\end{matrix}
\right)U^{\ast},$ for some matrices $B_{1}$, $P$ and $Q$ such that $B_{1}$ and $I+PQ$ are nonsingular. It then follows,
$$E_{B}=U\left(\begin{matrix}
B_{1}-T&B_{1}P-S\\
QB_{1}&QB_{1}P-N
\end{matrix}
\right)U^{\ast},~~F_{B}=U\left(\begin{matrix}
B_{1}-T^{\ast}&B_{1}P\\
QB_{1}-S^{\ast}&QB_{1}P-N^{\ast}
\end{matrix}
\right)U^{\ast},$$ $$I+E_{B}A^{\scriptsize\textcircled{\tiny $\dagger$}}=U\left(\begin{matrix}
B_{1}T^{-1}&0\\
QB_{1}T^{-1}&I
\end{matrix}
\right)U^{\ast},~~I+(A^{\scriptsize\textcircled{\tiny $\dagger$}})^{\ast}F_{B}=U\left(\begin{matrix}
(T^{-1})^{\ast}B_{1}&(T^{-1})^{\ast}B_{1}P\\
0&I
\end{matrix}
\right)U^{\ast}.$$ By a direct computation, we get that
$$(I+E_{B}A^{\scriptsize\textcircled{\tiny $\dagger$}})^{-1}=U\left(\begin{matrix}
TB_{1}^{-1}&0\\
-Q&I
\end{matrix}
\right)U^{\ast},~~(I+(A^{\scriptsize\textcircled{\tiny $\dagger$}})^{\ast}F_{B})^{-1}=U\left(\begin{matrix}
B_{1}^{-1}T^{\ast}&-P\\
0&I
\end{matrix}
\right)U^{\ast},$$
$$Y=(I+(A^{\scriptsize\textcircled{\tiny $\dagger$}})^{\ast}F_{B})^{-1}(A^{\scriptsize\textcircled{\tiny $\dagger$}})^{\ast}F_{B}A^{\pi}=U\left(\begin{matrix}
0&P\\
0&0
\end{matrix}
\right)U^{\ast},$$ $$Z=A^{\pi}E_{B}A^{\scriptsize\textcircled{\tiny $\dagger$}}(I+E_{B}A^{\scriptsize\textcircled{\tiny $\dagger$}})^{-1}=U\left(\begin{matrix}
0&0\\
Q&0
\end{matrix}
\right)U^{\ast}.$$ Since $I+PQ$ is nonsingular and $I+YZ=U\left(\begin{matrix}
I+PQ&0\\
0&I
\end{matrix}
\right)U^{\ast},$ we obtain that $I+YZ$ is nonsingular.
\end{proof}

From the above lemma, we obtain representations of $B^{\scriptsize\textcircled{\tiny $\dagger$}}$ in terms of $A^{\scriptsize\textcircled{\tiny $\dagger$}}$ and of $B^{\pi}$ in terms of $A^{\pi}$ and $A^{\scriptsize\textcircled{\tiny $\dagger$}}$.
\begin{theorem}\label{b4}  Let $A\in\mathbb{C}^{n\times n}$ with ${\rm ind}(A)=k>0$. If $B\in \mathbb{C}^{n\times n}$ is a stable perturbation of $A$ with ${\rm ind}(B)=s$, then
$$B^{\scriptsize\textcircled{\tiny $\dagger$}}=W_{1}^{-1}A^{\scriptsize\textcircled{\tiny $\dagger$}}(I+E_{B}A^{\scriptsize\textcircled{\tiny $\dagger$}})^{-1}W_{2}^{-1},$$
$$B^{\pi}=W_{2}A^{\pi}(I+E_{B}A^{\scriptsize\textcircled{\tiny $\dagger$}})^{-1}W_{2}^{-1},$$
where $W_{1}=(I+YZ)(I-Z)$ with $W_{1}^{-1}=(I+Z)(I+YZ)^{-1}$ and $W_{2}=(I-Z^{\ast})(I+Z^{\ast}Z)$ with $W_{2}^{-1}=(I+Z^{\ast}Z)^{-1}(I+Z^{\ast}).$
\end{theorem}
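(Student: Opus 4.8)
The plan is to exploit the core-EP decomposition from Lemma~\ref{a0} together with the block matrix forms already computed in the proof of Lemma~\ref{b3}, so that the entire verification reduces to a block $2\times 2$ matrix identity. Since all of $A^{\scriptsize\textcircled{\tiny $\dagger$}}$, $A^{\pi}$, $E_B A^{\scriptsize\textcircled{\tiny $\dagger$}}$, $Y$ and $Z$ have explicit forms in the unitary basis $U$, both candidate expressions on the right-hand side can be written as $U(\,\cdot\,)U^{\ast}$ and compared against the known block forms of $B^{\scriptsize\textcircled{\tiny $\dagger$}}$ and $B^{\pi}$.

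First I would record the ingredients in block form. From Lemma~\ref{b3} we have $Z=U\bigl(\begin{smallmatrix}0&0\\Q&0\end{smallmatrix}\bigr)U^{\ast}$ and $Y=U\bigl(\begin{smallmatrix}0&P\\0&0\end{smallmatrix}\bigr)U^{\ast}$, whence $Z^2=0$, $YZ=U\bigl(\begin{smallmatrix}PQ&0\\0&0\end{smallmatrix}\bigr)U^{\ast}$, and $Z^{\ast}Z=U\bigl(\begin{smallmatrix}Q^{\ast}Q&0\\0&0\end{smallmatrix}\bigr)U^{\ast}$. I would then verify the two claimed inverse formulas $W_1^{-1}=(I+Z)(I+YZ)^{-1}$ and $W_2^{-1}=(I+Z^{\ast}Z)^{-1}(I+Z^{\ast})$ directly; both are routine because $Z^2=0$ makes $(I-Z)^{-1}=I+Z$ and $(I-Z^{\ast})^{-1}=I+Z^{\ast}$, while $I+YZ$ is nonsingular by Lemma~\ref{b3} and $I+Z^{\ast}Z$ is positive definite, so $W_1,W_2$ are genuinely invertible and the stated factorizations hold. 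In block form one computes $W_2=U\bigl(\begin{smallmatrix}I+Q^{\ast}Q&0\\-Q&I\end{smallmatrix}\bigr)U^{\ast}$ and $W_2^{-1}=U\bigl(\begin{smallmatrix}(I+Q^{\ast}Q)^{-1}&0\\Q(I+Q^{\ast}Q)^{-1}&I\end{smallmatrix}\bigr)U^{\ast}$, and similarly for $W_1^{-1}=U\bigl(\begin{smallmatrix}(I+PQ)^{-1}&0\\Q(I+PQ)^{-1}&I\end{smallmatrix}\bigr)U^{\ast}$ after using $(I+Z)(I+YZ)^{-1}$.

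Next I would assemble the right-hand side of the $B^{\scriptsize\textcircled{\tiny $\dagger$}}$ formula by multiplying the four block matrices $W_1^{-1}$, $A^{\scriptsize\textcircled{\tiny $\dagger$}}=U\bigl(\begin{smallmatrix}T^{-1}&0\\0&0\end{smallmatrix}\bigr)U^{\ast}$, $(I+E_BA^{\scriptsize\textcircled{\tiny $\dagger$}})^{-1}=U\bigl(\begin{smallmatrix}TB_1^{-1}&0\\-Q&I\end{smallmatrix}\bigr)U^{\ast}$, and $W_2^{-1}$ in order. The middle product $A^{\scriptsize\textcircled{\tiny $\dagger$}}(I+E_BA^{\scriptsize\textcircled{\tiny $\dagger$}})^{-1}=U\bigl(\begin{smallmatrix}B_1^{-1}&0\\0&0\end{smallmatrix}\bigr)U^{\ast}$ simplifies things considerably, and multiplying by the surrounding factors should collapse to exactly the block form $U(L_B)^{\scriptsize\textcircled{\tiny \#}}U^{\ast}$ given by Lemma~\ref{a2} with $B_1$ replaced appropriately; since $L_B(L_B)^{\scriptsize\textcircled{\tiny $\dagger$}}=BB^{\scriptsize\textcircled{\tiny $\dagger$}}$ and the relation $(L_B)^{\scriptsize\textcircled{\tiny \#}}=B^{\scriptsize\textcircled{\tiny $\dagger$}}$ (from $L_B=B^2B^{\scriptsize\textcircled{\tiny $\dagger$}}$ and Lemma~\ref{a00}(iii)), this matches $B^{\scriptsize\textcircled{\tiny $\dagger$}}$. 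The $B^{\pi}$ identity is handled the same way, multiplying $W_2$, $A^{\pi}=U\bigl(\begin{smallmatrix}0&0\\0&I\end{smallmatrix}\bigr)U^{\ast}$, $(I+E_BA^{\scriptsize\textcircled{\tiny $\dagger$}})^{-1}$ and $W_2^{-1}$, and comparing with the $W^{\pi}$ block form in Lemma~\ref{a2}, noting that $B^{\pi}=L_B^{\pi}=I-L_B(L_B)^{\scriptsize\textcircled{\tiny $\dagger$}}$.

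The main obstacle is conceptual rather than computational: one must first justify that $B^{\scriptsize\textcircled{\tiny $\dagger$}}$ and $B^{\pi}$ can be identified with the core inverse $(L_B)^{\scriptsize\textcircled{\tiny \#}}$ and projector $L_B^{\pi}$ of the block matrix $L_B$ of Lemma~\ref{b2}(vi), so that Lemma~\ref{a2} applies to give their closed block forms. Once that identification is in place, the theorem is a bookkeeping verification that the two length-four block products equal the Lemma~\ref{a2} expressions, and I would present only the key intermediate products (notably $A^{\scriptsize\textcircled{\tiny $\dagger$}}(I+E_BA^{\scriptsize\textcircled{\tiny $\dagger$}})^{-1}$ and the block forms of $W_1^{-1},W_2,W_2^{-1}$) and the final comparison, suppressing the purely mechanical $2\times 2$ multiplications.
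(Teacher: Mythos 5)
Your overall route is exactly the paper's own proof: pass to the unitary basis of the core-EP decomposition (Lemma \ref{a0}), reuse the block forms of $Y$, $Z$ and $(I+E_{B}A^{\scriptsize\textcircled{\tiny $\dagger$}})^{-1}$ from the proof of Lemma \ref{b3}, multiply the four factors, and match the result against the closed forms of $W^{\scriptsize\textcircled{\tiny \#}}$ and $W^{\pi}$ in Lemma \ref{a2} applied to $L_{B}$, using $L_{B}^{\scriptsize\textcircled{\tiny \#}}=B^{\scriptsize\textcircled{\tiny $\dagger$}}$ and $L_{B}^{\pi}=B^{\pi}$. Your ``conceptual obstacle'' is handled the same way in the paper; just note that for $L_{B}^{\scriptsize\textcircled{\tiny \#}}=B^{\scriptsize\textcircled{\tiny $\dagger$}}$ you need Lemma \ref{a00}(iii) \emph{and} (iv): by (iii), $L_{B}=B^{2}B^{\scriptsize\textcircled{\tiny $\dagger$}}=(B^{\scriptsize\textcircled{\tiny $\dagger$}})^{\scriptsize\textcircled{\tiny $\dagger$}}$ and $L_B$ has index at most one, so by (iv), $L_{B}^{\scriptsize\textcircled{\tiny \#}}=L_{B}^{\scriptsize\textcircled{\tiny $\dagger$}}=((B^{\scriptsize\textcircled{\tiny $\dagger$}})^{\scriptsize\textcircled{\tiny $\dagger$}})^{\scriptsize\textcircled{\tiny $\dagger$}}=B^{\scriptsize\textcircled{\tiny $\dagger$}}$; and $B^{\pi}=L_{B}^{\pi}$ follows from $L_{B}L_{B}^{\scriptsize\textcircled{\tiny $\dagger$}}=BB^{\scriptsize\textcircled{\tiny $\dagger$}}$, already established in the proof of Lemma \ref{b2}.

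There is, however, a concrete error in your intermediate step: your block forms of $W_{2}$ and $W_{2}^{-1}$ have $Z$ and $Z^{\ast}$ interchanged. Since $Z^{\ast}=U\bigl(\begin{smallmatrix}0&Q^{\ast}\\0&0\end{smallmatrix}\bigr)U^{\ast}$ and $(Z^{\ast})^{2}=0$, one has $W_{2}=(I-Z^{\ast})(I+Z^{\ast}Z)=I-Z^{\ast}+Z^{\ast}Z=U\bigl(\begin{smallmatrix}I+Q^{\ast}Q&-Q^{\ast}\\0&I\end{smallmatrix}\bigr)U^{\ast}$ and $W_{2}^{-1}=U\bigl(\begin{smallmatrix}(I+Q^{\ast}Q)^{-1}&(I+Q^{\ast}Q)^{-1}Q^{\ast}\\0&I\end{smallmatrix}\bigr)U^{\ast}$, i.e., \emph{upper} block triangular; the lower-triangular matrices you wrote are the adjoints of these (they correspond to $I-Z+Z^{\ast}Z$, not to $W_{2}$). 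This is not cosmetic: with your version of $W_{2}^{-1}$, the product
$$W_{1}^{-1}A^{\scriptsize\textcircled{\tiny $\dagger$}}(I+E_{B}A^{\scriptsize\textcircled{\tiny $\dagger$}})^{-1}W_{2}^{-1}
= U\begin{pmatrix}(I+PQ)^{-1}B_{1}^{-1}(I+Q^{\ast}Q)^{-1}&0\\ Q(I+PQ)^{-1}B_{1}^{-1}(I+Q^{\ast}Q)^{-1}&0\end{pmatrix}U^{\ast}$$
has zero second block column, so it cannot equal $L_{B}^{\scriptsize\textcircled{\tiny \#}}$, whose $(1,2)$ block is $((I+Q^{\ast}Q)B_{1}(I+PQ))^{-1}Q^{\ast}\neq 0$ whenever $Q\neq 0$; your final ``should collapse'' step would therefore fail as planned. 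With the corrected $W_{2}^{-1}$ the four-factor product reproduces the Lemma \ref{a2} expression exactly (using $((I+Q^{\ast}Q)B_{1}(I+PQ))^{-1}=(I+PQ)^{-1}B_{1}^{-1}(I+Q^{\ast}Q)^{-1}$), and likewise $W_{2}A^{\pi}(I+E_{B}A^{\scriptsize\textcircled{\tiny $\dagger$}})^{-1}W_{2}^{-1}$ reproduces $L_{B}^{\pi}$; so the repair is local, after which your argument coincides with the paper's.
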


\begin{proof}
	We know that
	$E_{B}=L_{B}-A,$ $F_{B}=L_{B}-A^{\ast},$ $Y=(I+(A^{\scriptsize\textcircled{\tiny $\dagger$}})^{\ast}F_{B})^{-1}(A^{\scriptsize\textcircled{\tiny $\dagger$}})^{\ast}F_{B}A^{\pi}$ and $Z=A^{\pi}E_{B}A^{\scriptsize\textcircled{\tiny$\dagger$}}(I+E_{B}A^{\scriptsize\textcircled{\tiny $\dagger$}})^{-1}.$ Let $A$ be as in Lemma \ref{a0}. By Lemma \ref{b2}, we have $L_{B}=U\left(\begin{matrix}
B_{1}&B_{1}P\\
QB_{1}&QB_{1}P
\end{matrix}
\right)U^{\ast},$ for some matrices $B_{1}$, $P$ and $Q$ such that $B_{1}$ and $I+PQ$ are nonsingular. By the proof of Lemma \ref{b3}, we obtain $$W_{1}=(I+YZ)(I-Z)=U\left(\begin{matrix}
I+PQ&0\\
-Q&I
\end{matrix}
\right)U^{\ast},$$ $$W_{1}^{-1}=U\left(\begin{matrix}
(I+PQ)^{-1}&0\\
Q(I+PQ)^{-1}&I
\end{matrix}
\right)U^{\ast}=(I+Z)(I+YZ)^{-1},$$
$$W_{2}=(I-Z^{\ast})(I+Z^{\ast}Z)=U\left(\begin{matrix}
I+Q^{\ast}Q&-Q^{\ast}\\
0&I
\end{matrix}
\right)U^{\ast},$$ $$W_{2}^{-1}=U\left(\begin{matrix}
(I+Q^{\ast}Q)^{-1}&(I+Q^{\ast}Q)^{-1}Q^{\ast}\\
0&I
\end{matrix}
\right)U^{\ast}=(I+Z^{\ast}Z)^{-1}(I+Z^{\ast}).$$
Again, by the proof of Lemma \ref{b3}, Lemma \ref{a00} and Lemma \ref{a2}, we have
\begin{eqnarray*}
\begin{split}
  & W_{1}^{-1}A^{\scriptsize\textcircled{\tiny $\dagger$}}(I+E_{B}A^{\scriptsize\textcircled{\tiny $\dagger$}})^{-1}W_{2}^{-1}=U\left(\begin{matrix}
(I+PQ)^{-1}&0\\
Q(I+PQ)^{-1}&I
\end{matrix}
\right)\left(\begin{matrix}
T^{-1}&0\\
0&0
\end{matrix}
\right)\left(\begin{matrix}
TB_{1}^{-1}&0\\
-Q&I
\end{matrix}
\right)\\& \times\left(\begin{matrix}
(I+Q^{\ast}Q)^{-1}&(I+Q^{\ast}Q)^{-1}Q^{\ast}\\
0&I
\end{matrix}
\right)U^{\ast}
   = U\left(\begin{matrix}
(I+PQ)^{-1}&0\\
Q(I+PQ)^{-1}&I
\end{matrix}
\right)\times\\&\left(\begin{matrix}
B_{1}^{-1}&0\\
0&0
\end{matrix}
\right)\left(\begin{matrix}
(I+Q^{\ast}Q)^{-1}&(I+Q^{\ast}Q)^{-1}Q^{\ast}\\
0&I
\end{matrix}
\right)U^{\ast}\\&=U\left(\begin{matrix}
((I+Q^{\ast}Q)B_{1}(I+PQ))^{-1}&((I+Q^{\ast}Q)B_{1}(I+PQ))^{-1}Q^{\ast}\\
Q((I+Q^{\ast}Q)B_{1}(I+PQ))^{-1}&Q((I+Q^{\ast}Q)B_{1}(I+PQ))^{-1}Q^{\ast}
\end{matrix}
\right)U^{\ast}\\&=L_{B}^{\scriptsize\textcircled{\tiny \#}}=B^{\scriptsize\textcircled{\tiny $\dagger$}}
\end{split}
\end{eqnarray*}
and by the proof of Lemma \ref{b3} we get
\begin{eqnarray*}
  W_{2}A^{\pi}(I+E_{B}A^{\scriptsize\textcircled{\tiny $\dagger$}})^{-1}W_{2}^{-1} &=&U\left(\begin{matrix}
I+Q^{\ast}Q&-Q^{\ast}\\
0&I
\end{matrix}
\right)\left(\begin{matrix}
0&0\\
0&I
\end{matrix}
\right)\left(\begin{matrix}
TB_{1}^{-1}&0\\
-Q&I
\end{matrix}
\right)  \\
   & \times&\left(\begin{matrix}
(I+Q^{\ast}Q)^{-1}&(I+Q^{\ast}Q)^{-1}Q^{\ast}\\
0&I
\end{matrix}
\right)U^{\ast}\\&=&U\left(\begin{matrix}
Q^{\ast}Q&-Q^{\ast}\\
-Q&I
\end{matrix}
\right)\left(\begin{matrix}
(I+Q^{\ast}Q)^{-1}&(I+Q^{\ast}Q)^{-1}Q^{\ast}\\
0&I
\end{matrix}
\right)U^{\ast}\\&=&U\left(\begin{matrix}
I-(I+Q^{\ast}Q)^{-1}&-(I+Q^{\ast}Q)^{-1}Q^{\ast}\\
-Q(I+Q^{\ast}Q)^{-1}&I-Q(I+Q^{\ast}Q)^{-1}Q^{\ast}
\end{matrix}\right)U^{\ast}\\ &=&L_{B}^{\pi}=B^{\pi}.
\end{eqnarray*}
\end{proof}

Now, we investigate the stable perturbation bounds of the core-EP inverse. In order to simplify results, we again denote
	$E_{B}=L_{B}-A,$ $F_{B}=L_{B}-A^{\ast},$ $Y=(I+(A^{\scriptsize\textcircled{\tiny $\dagger$}})^{\ast}F_{B})^{-1}(A^{\scriptsize\textcircled{\tiny $\dagger$}})^{\ast}F_{B}A^{\pi}$ and $Z=A^{\pi}E_{B}A^{\scriptsize\textcircled{\tiny$\dagger$}}(I+E_{B}A^{\scriptsize\textcircled{\tiny $\dagger$}})^{-1}.$
\begin{theorem}\label{b5} Let $A\in\mathbb{C}^{n\times n}$ with ${\rm ind}(A)=k>0$ and let $B\in\mathbb{C}^{n\times n}$ with ${\rm ind}(B)=s.$ If $B$ is a stable perturbation of $A$, then
\begin{equation}\label{b66}
  \frac{\parallel B^{\scriptsize\textcircled{\tiny $\dagger$}}-A^{\scriptsize\textcircled{\tiny $\dagger$}}\parallel}{\parallel A^{\scriptsize\textcircled{\tiny $\dagger$}}\parallel}\leq\frac{\parallel W_{1}^{-1}\parallel\parallel W_{2}^{-1}\parallel}{\parallel A^{\scriptsize\textcircled{\tiny $\dagger$}}\parallel}(\parallel G_{1}\parallel+\parallel A^{\scriptsize\textcircled{\tiny $\dagger$}}\parallel\parallel G_{2}\parallel),
\end{equation}
where $G_{1}=A^{\scriptsize\textcircled{\tiny $\dagger$}}-(I+YZ-Z)A^{\scriptsize\textcircled{\tiny $\dagger$}}(I+Z^{\ast}Z-Z^{\ast})$ and $G_{2}=(I+E_{B}A^{\scriptsize\textcircled{\tiny $\dagger$}})^{-1}-I.$

Furthermore, if $\parallel Z\parallel<1$ and $\parallel YZ\parallel<1$, then
\begin{eqnarray*}
  \frac{\parallel B^{\scriptsize\textcircled{\tiny $\dagger$}}-A^{\scriptsize\textcircled{\tiny $\dagger$}}\parallel}{\parallel A^{\scriptsize\textcircled{\tiny $\dagger$}}\parallel} &\leq& \frac{1+\parallel Z\parallel}{\parallel A^{\scriptsize\textcircled{\tiny $\dagger$}}\parallel(1-\parallel Z\parallel)(1-\parallel YZ\parallel)}(\parallel G_{1}\parallel+\parallel A^{\scriptsize\textcircled{\tiny $\dagger$}}\parallel\parallel G_{2}\parallel) \\
  &\leq& \frac{1+\parallel Z\parallel}{(1-\parallel Z\parallel)(1-\parallel YZ\parallel)}(1+\alpha\beta+\parallel G_{2}\parallel),~~~~~~~~~~~~~~~~~~~~~~~~~~~(3.2)
\end{eqnarray*}
where $\alpha=1+\parallel Z\parallel+\parallel YZ\parallel$ and $\beta=1+\parallel Z\parallel+\parallel Z\parallel^{2}.$
\end{theorem}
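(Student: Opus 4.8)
The plan is to start from the factorization of $B^{\scriptsize\textcircled{\tiny $\dagger$}}$ supplied by Theorem \ref{b4} and to turn the difference $B^{\scriptsize\textcircled{\tiny $\dagger$}}-A^{\scriptsize\textcircled{\tiny $\dagger$}}$ into a single product flanked by $W_{1}^{-1}$ and $W_{2}^{-1}$. The decisive structural fact, read off from the matrix forms computed in the proof of Lemma \ref{b3}, is that $Z=U\left(\begin{smallmatrix}0&0\\Q&0\end{smallmatrix}\right)U^{\ast}$ is block nilpotent, so $Z^{2}=0$ and $(Z^{\ast})^{2}=0$. Expanding the products then collapses them to sums: $W_{1}=(I+YZ)(I-Z)=I+YZ-Z$ and $W_{2}=(I-Z^{\ast})(I+Z^{\ast}Z)=I+Z^{\ast}Z-Z^{\ast}$, which are exactly the factors appearing in $G_{1}$. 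Hence $G_{1}=A^{\scriptsize\textcircled{\tiny $\dagger$}}-W_{1}A^{\scriptsize\textcircled{\tiny $\dagger$}}W_{2}$.

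Using $A^{\scriptsize\textcircled{\tiny $\dagger$}}=W_{1}^{-1}(W_{1}A^{\scriptsize\textcircled{\tiny $\dagger$}}W_{2})W_{2}^{-1}$ together with the expression $B^{\scriptsize\textcircled{\tiny $\dagger$}}=W_{1}^{-1}A^{\scriptsize\textcircled{\tiny $\dagger$}}(I+E_{B}A^{\scriptsize\textcircled{\tiny $\dagger$}})^{-1}W_{2}^{-1}$ from Theorem \ref{b4}, I would write $B^{\scriptsize\textcircled{\tiny $\dagger$}}-A^{\scriptsize\textcircled{\tiny $\dagger$}}=W_{1}^{-1}\big[A^{\scriptsize\textcircled{\tiny $\dagger$}}(I+E_{B}A^{\scriptsize\textcircled{\tiny $\dagger$}})^{-1}-W_{1}A^{\scriptsize\textcircled{\tiny $\dagger$}}W_{2}\big]W_{2}^{-1}$. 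Inserting $\pm A^{\scriptsize\textcircled{\tiny $\dagger$}}$ inside the bracket and regrouping splits it as $A^{\scriptsize\textcircled{\tiny $\dagger$}}\big[(I+E_{B}A^{\scriptsize\textcircled{\tiny $\dagger$}})^{-1}-I\big]+\big[A^{\scriptsize\textcircled{\tiny $\dagger$}}-W_{1}A^{\scriptsize\textcircled{\tiny $\dagger$}}W_{2}\big]=A^{\scriptsize\textcircled{\tiny $\dagger$}}G_{2}+G_{1}$, so that $B^{\scriptsize\textcircled{\tiny $\dagger$}}-A^{\scriptsize\textcircled{\tiny $\dagger$}}=W_{1}^{-1}(G_{1}+A^{\scriptsize\textcircled{\tiny $\dagger$}}G_{2})W_{2}^{-1}$. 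Submultiplicativity of the spectral norm and the triangle inequality then give $\|B^{\scriptsize\textcircled{\tiny $\dagger$}}-A^{\scriptsize\textcircled{\tiny $\dagger$}}\|\leq\|W_{1}^{-1}\|\,\|W_{2}^{-1}\|(\|G_{1}\|+\|A^{\scriptsize\textcircled{\tiny $\dagger$}}\|\,\|G_{2}\|)$, and dividing by $\|A^{\scriptsize\textcircled{\tiny $\dagger$}}\|$ yields the first bound \eqref{b66}.

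For the refined estimates under $\|Z\|<1$ and $\|YZ\|<1$, I would bound the two flanking factors separately. From $W_{1}^{-1}=(I+Z)(I+YZ)^{-1}$ a Neumann estimate gives $\|W_{1}^{-1}\|\leq(1+\|Z\|)/(1-\|YZ\|)$. For $W_{2}$ the cleanest route uses the nilpotency again: since $(Z^{\ast})^{2}=0$ one has $(I-Z^{\ast})^{-1}=I+Z^{\ast}$, whence $W_{2}^{-1}=(I+Z^{\ast}Z)^{-1}(I-Z^{\ast})^{-1}$; because $I+Z^{\ast}Z$ is Hermitian positive definite with smallest eigenvalue at least $1$, $\|(I+Z^{\ast}Z)^{-1}\|\leq1$, while $\|(I-Z^{\ast})^{-1}\|\leq1/(1-\|Z\|)$, giving $\|W_{2}^{-1}\|\leq1/(1-\|Z\|)$. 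Substituting these into \eqref{b66} produces the first inequality of (3.2).

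Finally, the second inequality of (3.2) only requires absorbing $\|G_{1}\|/\|A^{\scriptsize\textcircled{\tiny $\dagger$}}\|$ into the constant $1+\alpha\beta$. Applying the triangle inequality to $G_{1}=A^{\scriptsize\textcircled{\tiny $\dagger$}}-(I+YZ-Z)A^{\scriptsize\textcircled{\tiny $\dagger$}}(I+Z^{\ast}Z-Z^{\ast})$ and then submultiplicativity gives $\|G_{1}\|\leq\|A^{\scriptsize\textcircled{\tiny $\dagger$}}\|(1+\|I+YZ-Z\|\,\|I+Z^{\ast}Z-Z^{\ast}\|)$, and since $\|I+YZ-Z\|\leq1+\|YZ\|+\|Z\|=\alpha$ and $\|I+Z^{\ast}Z-Z^{\ast}\|\leq1+\|Z\|^{2}+\|Z\|=\beta$ (using $\|Z^{\ast}Z\|=\|Z\|^{2}$ and $\|Z^{\ast}\|=\|Z\|$), we obtain $\|G_{1}\|/\|A^{\scriptsize\textcircled{\tiny $\dagger$}}\|\leq1+\alpha\beta$; combining with the first inequality of (3.2) finishes the argument. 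The computation is essentially mechanical once the factorization is in place, so the only genuinely load-bearing observation is the nilpotency $Z^{2}=0$, which simultaneously collapses $W_{1}$ and $W_{2}$ to the very sums defining $G_{1}$ and makes the Neumann identity $(I-Z^{\ast})^{-1}=I+Z^{\ast}$ exact. The mildly delicate point is electing to estimate $\|W_{2}^{-1}\|$ through $(I-Z^{\ast})^{-1}$ rather than through $(I+Z^{\ast})$, since only the former produces the factor $1-\|Z\|$ in the denominator of the stated bound.
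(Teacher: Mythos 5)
Your proposal is correct and follows essentially the same route as the paper: the nilpotency $Z^{2}=0$ collapsing $W_{1}$ and $W_{2}$ to $I+YZ-Z$ and $I+Z^{\ast}Z-Z^{\ast}$, the identity $B^{\scriptsize\textcircled{\tiny $\dagger$}}-A^{\scriptsize\textcircled{\tiny $\dagger$}}=W_{1}^{-1}(G_{1}+A^{\scriptsize\textcircled{\tiny $\dagger$}}G_{2})W_{2}^{-1}$, and the same $\alpha$, $\beta$ estimates are exactly the paper's argument. The only micro-difference is that you bound $\parallel W_{2}^{-1}\parallel$ via $(I+Z^{\ast}Z)^{-1}(I-Z^{\ast})^{-1}$ with $\parallel (I+Z^{\ast}Z)^{-1}\parallel\leq 1$, while the paper uses $(I+Z^{\ast}Z)^{-1}(I+Z^{\ast})$ and the Neumann bound $\frac{1+\parallel Z\parallel}{1-\parallel Z\parallel^{2}}$; both yield the same constant $\frac{1}{1-\parallel Z\parallel}$.
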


\begin{proof}  By the proof of Lemma \ref{b3}, we know that $Z^{2}=0.$ Then $W_{1}=I+YZ-Z$ and $W_{2}=I+Z^{\ast}Z-Z^{\ast}.$ So, we have
\begin{eqnarray*}
 W_{1}^{-1}A^{\scriptsize\textcircled{\tiny $\dagger$}}W_{2}^{-1}-A^{\scriptsize\textcircled{\tiny $\dagger$}}&=&W_{1}^{-1}(A^{\scriptsize\textcircled{\tiny $\dagger$}}-W_{1}A^{\scriptsize\textcircled{\tiny $\dagger$}}W_{2})W_{2}^{-1}  \\
  &=&W_{1}^{-1}(A^{\scriptsize\textcircled{\tiny $\dagger$}}-(I+YZ-Z)A^{\scriptsize\textcircled{\tiny $\dagger$}}(I+Z^{\ast}Z-Z^{\ast}))W_{2}^{-1},
\end{eqnarray*} and
$$B^{\scriptsize\textcircled{\tiny $\dagger$}}- W_{1}^{-1}A^{\scriptsize\textcircled{\tiny $\dagger$}}W_{2}^{-1}= W_{1}^{-1}A^{\scriptsize\textcircled{\tiny $\dagger$}}((I+E_{B}A^{\scriptsize\textcircled{\tiny $\dagger$}})^{-1}-I)W_{2}^{-1}.$$
Denoting $$G_{1}=A^{\scriptsize\textcircled{\tiny $\dagger$}}-(I+YZ-Z)A^{\scriptsize\textcircled{\tiny $\dagger$}}(I+Z^{\ast}Z-Z^{\ast})$$ and $$G_{2}=(I+E_{B}A^{\scriptsize\textcircled{\tiny $\dagger$}})^{-1}-I,$$ we get
\begin{eqnarray*}
  B^{\scriptsize\textcircled{\tiny $\dagger$}}-A^{\scriptsize\textcircled{\tiny $\dagger$}} &=& W_{1}^{-1}A^{\scriptsize\textcircled{\tiny $\dagger$}}W_{2}^{-1}+W_{1}^{-1}A^{\scriptsize\textcircled{\tiny $\dagger$}}G_{2}W_{2}^{-1}+W_{1}^{-1}G_{1}W_{2}^{-1}-W_{1}^{-1}A^{\scriptsize\textcircled{\tiny $\dagger$}}W_{2}^{-1} \\
   &=&  W_{1}^{-1}(G_{1}+A^{\scriptsize\textcircled{\tiny $\dagger$}}G_{2})W_{2}^{-1}.
\end{eqnarray*}
Thus, $$\frac{\parallel B^{\scriptsize\textcircled{\tiny $\dagger$}}-A^{\scriptsize\textcircled{\tiny $\dagger$}}\parallel}{\parallel A^{\scriptsize\textcircled{\tiny $\dagger$}}\parallel}\leq\frac{\parallel W_{1}^{-1}\parallel\parallel W_{2}^{-1}\parallel}{\parallel A^{\scriptsize\textcircled{\tiny $\dagger$}}\parallel}(\parallel G_{1}\parallel+\parallel A^{\scriptsize\textcircled{\tiny $\dagger$}}\parallel\parallel G_{2}\parallel).$$

If $\parallel Z\parallel<1$ and$\parallel YZ\parallel<1$, then
$$\parallel W_{1}\parallel\leq1+\parallel YZ\parallel+\parallel Z\parallel,~~\parallel W_{2}\parallel\leq1+\parallel Z\parallel+\parallel Z\parallel^{2},$$
$$\parallel W_{1}^{-1}\parallel\leq\frac{1+\parallel Z\parallel}{1-\parallel YZ\parallel},~~\parallel W_{2}^{-1}\parallel\leq\frac{1+\parallel Z\parallel}{1-\parallel Z\parallel^{2}}=\frac{1}{1-\parallel Z\parallel}.$$
Setting $\alpha:=1+\parallel YZ\parallel+\parallel Z\parallel$ and $\beta:=1+\parallel Z\parallel+\parallel Z\parallel^{2},$ we obtain $$\parallel G_{1}\parallel\leq\parallel A^{\scriptsize\textcircled{\tiny $\dagger$}}\parallel+\parallel A^{\scriptsize\textcircled{\tiny $\dagger$}}\parallel\parallel W_{1}\parallel\parallel W_{2}\parallel\leq(1+\alpha\beta)\parallel A^{\scriptsize\textcircled{\tiny $\dagger$}}\parallel.$$  By substitution and simplification of inequality (\ref{b66}), we obtain inequality (3.2).
\end{proof}

\begin{theorem}\label{b6} Let $A\in\mathbb{C}^{n\times n}$ with ${\rm ind}(A)=k>0$ and let $B\in\mathbb{C}^{n\times n}$ with ${\rm ind}(B)=s.$ If $B$ is a stable perturbation of $A$ and $\parallel Z\parallel<1$, then
$$\parallel B^{\pi}-A^{\pi}\parallel\leq\frac{2\parallel Z\parallel}{1-\parallel Z\parallel}.$$
\end{theorem}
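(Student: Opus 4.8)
\noindent\textit{Proof strategy.} The plan is to start from the representation of $B^{\pi}$ given in Theorem \ref{b4}, namely $B^{\pi}=W_{2}A^{\pi}(I+E_{B}A^{\scriptsize\textcircled{\tiny $\dagger$}})^{-1}W_{2}^{-1}$, and first reduce it to the much more tractable form $B^{\pi}=W_{2}(A^{\pi}-Z)W_{2}^{-1}$. This reduction is essentially free from the definition of $Z$: since $E_{B}A^{\scriptsize\textcircled{\tiny $\dagger$}}=(I+E_{B}A^{\scriptsize\textcircled{\tiny $\dagger$}})-I$, one has $E_{B}A^{\scriptsize\textcircled{\tiny $\dagger$}}(I+E_{B}A^{\scriptsize\textcircled{\tiny $\dagger$}})^{-1}=I-(I+E_{B}A^{\scriptsize\textcircled{\tiny $\dagger$}})^{-1}$, and multiplying on the left by $A^{\pi}$ gives $Z=A^{\pi}-A^{\pi}(I+E_{B}A^{\scriptsize\textcircled{\tiny $\dagger$}})^{-1}$, that is, $A^{\pi}(I+E_{B}A^{\scriptsize\textcircled{\tiny $\dagger$}})^{-1}=A^{\pi}-Z$.

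Next I would exploit the structural relations already recorded in the proof of Lemma \ref{b3}. From the block forms of $Z$ and $A^{\pi}$ there, one reads off $Z^{2}=0$, $(Z^{\ast})^{2}=0$, together with $Z A^{\pi}=0$, $Z^{\ast}A^{\pi}=Z^{\ast}$ and $A^{\pi}Z^{\ast}=0$; these also yield $W_{2}=I-Z^{\ast}+Z^{\ast}Z$ and $W_{2}^{-1}=(I+Z^{\ast}Z)^{-1}(I+Z^{\ast})$, as already used in the proof of Theorem \ref{b5}. Using $A^{\pi}Z^{\ast}=0$ I obtain $A^{\pi}W_{2}=A^{\pi}$, hence $A^{\pi}W_{2}^{-1}=A^{\pi}$, and using $Z^{\ast}A^{\pi}=Z^{\ast}$ and $ZA^{\pi}=0$ I obtain $W_{2}A^{\pi}=A^{\pi}-Z^{\ast}$ and $W_{2}Z=Z-Z^{\ast}Z$. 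Substituting these into $B^{\pi}-A^{\pi}=W_{2}(A^{\pi}-Z)W_{2}^{-1}-A^{\pi}$ collapses everything to the single clean identity
$$B^{\pi}-A^{\pi}=-(Z+Z^{\ast}-Z^{\ast}Z)W_{2}^{-1}.$$

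For the estimate I would deliberately avoid the triangle inequality on $Z+Z^{\ast}-Z^{\ast}Z$, since that produces the numerator $2\parallel Z\parallel+\parallel Z\parallel^{2}$, which is too large. Instead I split the right-hand side as $-ZW_{2}^{-1}-(Z^{\ast}-Z^{\ast}Z)W_{2}^{-1}$ and observe the telescoping relation $Z^{\ast}-Z^{\ast}Z=I-W_{2}$, so that the second term becomes $(I-W_{2})W_{2}^{-1}=W_{2}^{-1}-I=(I+Z^{\ast}Z)^{-1}Z^{\ast}(I-Z)$. Then $\parallel ZW_{2}^{-1}\parallel\leq\parallel Z\parallel\,\parallel W_{2}^{-1}\parallel\leq\frac{\parallel Z\parallel}{1-\parallel Z\parallel}$, using the bound $\parallel W_{2}^{-1}\parallel\leq\frac{1}{1-\parallel Z\parallel}$ from Theorem \ref{b5}, while $\parallel W_{2}^{-1}-I\parallel\leq\parallel(I+Z^{\ast}Z)^{-1}\parallel\,\parallel Z^{\ast}\parallel\,\parallel I-Z\parallel\leq\parallel Z\parallel(1+\parallel Z\parallel)$, since $I+Z^{\ast}Z\succeq I$ forces $\parallel(I+Z^{\ast}Z)^{-1}\parallel\leq1$. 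Finally $\parallel Z\parallel(1+\parallel Z\parallel)=\frac{\parallel Z\parallel(1-\parallel Z\parallel^{2})}{1-\parallel Z\parallel}\leq\frac{\parallel Z\parallel}{1-\parallel Z\parallel}$, and adding the two contributions gives exactly $\frac{2\parallel Z\parallel}{1-\parallel Z\parallel}$.

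The main obstacle is not any single computation but the bookkeeping that leads to the compact identity $B^{\pi}-A^{\pi}=-(Z+Z^{\ast}-Z^{\ast}Z)W_{2}^{-1}$ and, more subtly, the choice of estimate afterwards: a direct norm bound overshoots, and one must notice the telescoping $Z^{\ast}-Z^{\ast}Z=I-W_{2}$ (equivalently $W_{2}^{-1}-I=(I+Z^{\ast}Z)^{-1}Z^{\ast}(I-Z)$) in order to recover the factor $1-\parallel Z\parallel^{2}\leq1$ and land precisely on the constant $2$ in the numerator.
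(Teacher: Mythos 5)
Your proposal is correct and follows essentially the same route as the paper: both start from $B^{\pi}=W_{2}A^{\pi}(I+E_{B}A^{\scriptsize\textcircled{\tiny $\dagger$}})^{-1}W_{2}^{-1}$, use $A^{\pi}(I+E_{B}A^{\scriptsize\textcircled{\tiny $\dagger$}})^{-1}=A^{\pi}-Z$ together with the structural relations $Z^{2}=0$, $A^{\pi}Z^{\ast}=0$, $Z^{\ast}A^{\pi}=Z^{\ast}$ to obtain an exact expression for $B^{\pi}-A^{\pi}$, and then bound two pieces using $\parallel W_{2}^{-1}\parallel\leq\frac{1}{1-\parallel Z\parallel}$. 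Your identity $B^{\pi}-A^{\pi}=-(Z+Z^{\ast}-Z^{\ast}Z)W_{2}^{-1}$ coincides with the paper's $-Z^{\ast}-(I-Z^{\ast})Z(I+Z^{\ast}Z)^{-1}(I+Z^{\ast})$ (since $Z^{\ast}W_{2}^{-1}=Z^{\ast}$), and your split (first term $\leq\frac{\parallel Z\parallel}{1-\parallel Z\parallel}$, second $\leq\parallel Z\parallel(1+\parallel Z\parallel)\leq\frac{\parallel Z\parallel}{1-\parallel Z\parallel}$) versus the paper's ($\parallel Z\parallel$ plus $\frac{\parallel Z\parallel(1+\parallel Z\parallel)}{1-\parallel Z\parallel}$) is only a cosmetic rearrangement yielding the same bound $\frac{2\parallel Z\parallel}{1-\parallel Z\parallel}$.
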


\begin{proof} We know that
		$E_{B}=L_{B}-A$ and $Z=A^{\pi}E_{B}A^{\scriptsize\textcircled{\tiny$\dagger$}}(I+E_{B}A^{\scriptsize\textcircled{\tiny $\dagger$}})^{-1}.$ Since $A^{\pi}Z^{\ast}=0$, $Z^{\ast}A^{\pi}=Z^{\ast}$ and $(I+Z^{\ast}Z)A^{\pi}=A^{\pi}=A^{\pi}(I+Z^{\ast}Z)^{-1}$, we have
\begin{eqnarray*}
  W_{2}A^{\pi}W_{2}^{-1} -A^{\pi}&=&(I-Z^{\ast})(I+Z^{\ast}Z)A^{\pi}(I+Z^{\ast}Z)^{-1}(I+Z^{\ast})- A^{\pi} \\
   &=& (I-Z^{\ast})A^{\pi}(I+Z^{\ast})- A^{\pi}\\
   &=&  (I-Z^{\ast})A^{\pi}-A^{\pi}=-Z^{\ast}.
\end{eqnarray*}
By the proof of Lemma \ref{b3}, it is easy to check that $A^{\pi}(I+E_{B}A^{\scriptsize\textcircled{\tiny $\dagger$}})^{-1}-A^{\pi}=-Z.$ Using that $A^{\pi}G_{2} = -Z,$ we obtain
\begin{eqnarray*}
  B^{\pi}-A^{\pi} &=&  W_{2}A^{\pi}(I+E_{B}A^{\scriptsize\textcircled{\tiny $\dagger$}})^{-1}W_{2}^{-1}-Z^{\ast}- W_{2}A^{\pi}W_{2}^{-1} \\
   &=& -Z^{\ast}-(I-Z^{\ast})Z(I+Z^{\ast}Z)^{-1}(I+Z^{\ast}).
\end{eqnarray*}
Since $\parallel Z\parallel<1,$ we have
$$\parallel B^{\pi}-A^{\pi}\parallel\leq\parallel Z\parallel+\frac{\parallel Z\parallel(1+\parallel Z\parallel)}{1-\parallel Z\parallel}=\frac{2\parallel Z\parallel}{1-\parallel Z\parallel}.$$
\end{proof}

\begin{remark}\label{b7} Let $B$ be a stable perturbation of $A$. Then the following two statements hold:
\begin{itemize}
\item [{\rm (i)}] If $\parallel E_{B}A^{\scriptsize\textcircled{\tiny $\dagger$}}\parallel+\parallel A^{\pi}E_{B}A^{\scriptsize\textcircled{\tiny $\dagger$}}\parallel<1$, then $\parallel Z\parallel<1$;
\item [{\rm (ii)}] If ${\rm max}\{ \parallel E_{B}A^{\scriptsize\textcircled{\tiny $\dagger$}}\parallel,~\parallel (A^{\scriptsize\textcircled{\tiny $\dagger$}})^{\ast}F_{B}\parallel\}<\frac{1}{1+\sqrt{\parallel A^{\pi}\parallel}},$ then $\parallel YZ\parallel<1.$
\end{itemize}
In fact, by the expressions of $Y$ and $Z$, we have
$$\parallel Z\parallel\leq\frac{\parallel A^{\pi}E_{B}A^{\scriptsize\textcircled{\tiny $\dagger$}}\parallel}{1-\parallel E_{B}A^{\scriptsize\textcircled{\tiny $\dagger$}}\parallel}<1,$$
$$\parallel YZ\parallel\leq\frac{\parallel (A^{\scriptsize\textcircled{\tiny $\dagger$}})^{\ast}F_{B}\parallel\parallel A^{\pi}E_{B}A^{\scriptsize\textcircled{\tiny $\dagger$}}\parallel}{(1-\parallel (A^{\scriptsize\textcircled{\tiny $\dagger$}})^{\ast}F_{B}\parallel)(1-\parallel E_{B}A^{\scriptsize\textcircled{\tiny $\dagger$}}\parallel)}<\frac{\parallel A^{\pi}\parallel(\frac{1}{1+\sqrt{\parallel A^{\pi}\parallel}})^{2}}{(1-\frac{1}{1+\sqrt{\parallel A^{\pi}\parallel}})^{2}}=1.$$
\end{remark}

\section{Acute perturbation of the core-EP inverse}\label{c}
In this section, we investigate the acute perturbation of the core-EP inverse. 

Firstly, we present the definition of acute perturbation.
\begin{definition}\label{c1} Let $A\in\mathbb{C}^{n\times n}$. A matrix $B\in C^{n \times n}$ is called an acute perturbation related to the core-EP inverse (or in short, acute perturbation) if $\parallel E_{B}\parallel$ is small and the spectral radius $\rho( B^{\pi}-A^{\pi})<1$.
\end{definition}

Now, we give an upper bound for the spectral radius $\rho( B^{\pi}-A^{\pi})$. In order to simplify results, we again denote
	$E_{B}=L_{B}-A$ and $Z=A^{\pi}E_{B}A^{\scriptsize\textcircled{\tiny$\dagger$}}(I+E_{B}A^{\scriptsize\textcircled{\tiny $\dagger$}})^{-1}.$
\begin{theorem}\label{c2} Let $A ,B\in\mathbb{C}^{n\times n}$ with ${\rm ind}(A)=k>0$, ${\rm ind}(B)=s$ and ${\rm rk}(A^{k})={\rm rk}(B^{s}).$ If the perturbation $E_{B}$ satisfies $\parallel E_{B}A^{\scriptsize\textcircled{\tiny $\dagger$}}\parallel<\frac{1}{1+\sqrt{2\parallel A^{\pi}\parallel}}$, then the following conditions hold:
\begin{itemize}
\item [{\rm (i)}] $\rho(Z^{\ast}Z)<\frac{1}{2}$;
\item [{\rm (ii)}] $\rho(BB^{\scriptsize\textcircled{\tiny $\dagger$}}(I-AA^{\scriptsize\textcircled{\tiny $\dagger$}}))=\rho(AA^{\scriptsize\textcircled{\tiny $\dagger$}}(I-BB^{\scriptsize\textcircled{\tiny $\dagger$}}))\leq\frac{\rho(Z^{\ast}Z)}{1-\rho(Z^{\ast}Z)}$;
\item [{\rm (iii)}]$(\rho( B^{\pi}-A^{\pi}))^{2}=\rho(AA^{\scriptsize\textcircled{\tiny $\dagger$}}(I-BB^{\scriptsize\textcircled{\tiny $\dagger$}}))=\rho( (B^{\pi}-A^{\pi})^{2})<1$;
\item [{\rm (iv)}] $I-BB^{\scriptsize\textcircled{\tiny $\dagger$}}A^{\pi}$, $I-AA^{\scriptsize\textcircled{\tiny $\dagger$}}B^{\pi}$, $I-(B^{\pi}-A^{\pi})$ and $I-(B^{\pi}-A^{\pi})^{2}$ are nonsingular.
\end{itemize}
 \end{theorem}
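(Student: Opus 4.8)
The plan is to push everything through the core-EP (block) representation of $A$ from Lemma~\ref{a0}, in which all the relevant quantities are controlled by the single off-diagonal block $Q$. First I would observe that the hypotheses already force $B$ to be a stable perturbation, so that the entire machinery of Section~\ref{b} is available. Indeed, $\|E_{B}A^{\scriptsize\textcircled{\tiny $\dagger$}}\|<\frac{1}{1+\sqrt{2\|A^{\pi}\|}}\le 1$ makes $I+E_{B}A^{\scriptsize\textcircled{\tiny $\dagger$}}=I+(L_{B}-A)A^{\scriptsize\textcircled{\tiny $\dagger$}}$ nonsingular by the Neumann series, and together with ${\rm rk}(B^{s})={\rm rk}(A^{k})$ this is exactly condition (vii) of Lemma~\ref{b2}; hence $B$ is a stable perturbation. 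Writing $A$ as in Lemma~\ref{a0} and $L_{B}$ as in Lemma~\ref{b2}(vi), the proof of Lemma~\ref{b3} gives $Z=U\left(\begin{smallmatrix}0&0\\Q&0\end{smallmatrix}\right)U^{\ast}$, whence $Z^{\ast}Z=U\left(\begin{smallmatrix}Q^{\ast}Q&0\\0&0\end{smallmatrix}\right)U^{\ast}$ and $\rho(Z^{\ast}Z)=\|Z\|^{2}=\|Q\|^{2}$; moreover $BB^{\scriptsize\textcircled{\tiny $\dagger$}}=L_{B}L_{B}^{\scriptsize\textcircled{\tiny \#}}$ and $B^{\pi}=L_{B}^{\pi}$ are given in closed form in terms of $Q$ by Lemma~\ref{a2} (as used in the proofs of Lemma~\ref{b2} and Theorem~\ref{b4}).

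For (i) I would estimate $\|Z\|$ directly. Submultiplicativity and $\|(I+E_{B}A^{\scriptsize\textcircled{\tiny $\dagger$}})^{-1}\|\le(1-\|E_{B}A^{\scriptsize\textcircled{\tiny $\dagger$}}\|)^{-1}$, combined with the orthogonal-projector bound $\|A^{\pi}M\|\le\sqrt{\|A^{\pi}\|}\,\|M\|$ (which follows from $\|A^{\pi}M\|^{2}=\rho(M^{\ast}A^{\pi}M)\le\|A^{\pi}\|\,\|M\|^{2}$ since $(A^{\pi})^{2}=A^{\pi}=(A^{\pi})^{\ast}$), yield $\|Z\|\le\sqrt{\|A^{\pi}\|}\,\|E_{B}A^{\scriptsize\textcircled{\tiny $\dagger$}}\|/(1-\|E_{B}A^{\scriptsize\textcircled{\tiny $\dagger$}}\|)$. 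The hypothesis $\|E_{B}A^{\scriptsize\textcircled{\tiny $\dagger$}}\|<\frac{1}{1+\sqrt{2\|A^{\pi}\|}}$ is precisely equivalent to $\sqrt{2\|A^{\pi}\|}\,\|E_{B}A^{\scriptsize\textcircled{\tiny $\dagger$}}\|<1-\|E_{B}A^{\scriptsize\textcircled{\tiny $\dagger$}}\|$, which after squaring rearranges to $\rho(Z^{\ast}Z)=\|Z\|^{2}<\tfrac12$.

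For (ii) and (iii) I would evaluate the projector products in block form. Using the expressions of $BB^{\scriptsize\textcircled{\tiny $\dagger$}}$ and $B^{\pi}$ from Lemma~\ref{a2} together with $AA^{\scriptsize\textcircled{\tiny $\dagger$}}=U\left(\begin{smallmatrix}I&0\\0&0\end{smallmatrix}\right)U^{\ast}$ and $A^{\pi}=U\left(\begin{smallmatrix}0&0\\0&I\end{smallmatrix}\right)U^{\ast}$, a direct multiplication shows that both $BB^{\scriptsize\textcircled{\tiny $\dagger$}}(I-AA^{\scriptsize\textcircled{\tiny $\dagger$}})$ and $AA^{\scriptsize\textcircled{\tiny $\dagger$}}(I-BB^{\scriptsize\textcircled{\tiny $\dagger$}})$ have spectral radius $\|Q\|^{2}/(1+\|Q\|^{2})=\rho(Z^{\ast}Z)/(1+\rho(Z^{\ast}Z))$; since $\rho(Z^{\ast}Z)<\tfrac12<1$ by (i), this is at most $\rho(Z^{\ast}Z)/(1-\rho(Z^{\ast}Z))$, which proves (ii). For (iii) I would first note $B^{\pi}-A^{\pi}=AA^{\scriptsize\textcircled{\tiny $\dagger$}}-BB^{\scriptsize\textcircled{\tiny $\dagger$}}$ is Hermitian, so $\rho((B^{\pi}-A^{\pi})^{2})=(\rho(B^{\pi}-A^{\pi}))^{2}$ holds automatically. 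The remaining equality $(\rho(B^{\pi}-A^{\pi}))^{2}=\rho(AA^{\scriptsize\textcircled{\tiny $\dagger$}}(I-BB^{\scriptsize\textcircled{\tiny $\dagger$}}))$ is the crux: writing $P=A^{\pi}$, $R=B^{\pi}$ (both orthogonal projectors), the identity $(P-R)^{2}P=P-PRP=P(P-R)^{2}$ shows that $(P-R)^{2}$ leaves $R(A^{\pi})$ and $N(A^{\pi})$ invariant, and on these two subspaces its block restrictions $Q(I+Q^{\ast}Q)^{-1}Q^{\ast}$ and $I-(I+Q^{\ast}Q)^{-1}$ both have spectral radius $\|Q\|^{2}/(1+\|Q\|^{2})$; hence $\rho((B^{\pi}-A^{\pi})^{2})=\|Q\|^{2}/(1+\|Q\|^{2})=\rho(AA^{\scriptsize\textcircled{\tiny $\dagger$}}(I-BB^{\scriptsize\textcircled{\tiny $\dagger$}}))<1$.

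Finally, (iv) is immediate once (ii) and (iii) are in hand: each of the four matrices is of the form $I-M$ with $\rho(M)<1$. Indeed $\rho(BB^{\scriptsize\textcircled{\tiny $\dagger$}}A^{\pi})=\rho(AA^{\scriptsize\textcircled{\tiny $\dagger$}}B^{\pi})=\|Q\|^{2}/(1+\|Q\|^{2})<1$ by (ii), $\rho((B^{\pi}-A^{\pi})^{2})<1$ by (iii), and since $B^{\pi}-A^{\pi}$ is Hermitian with $\rho(B^{\pi}-A^{\pi})<1$ its spectrum avoids $1$; therefore $I$ minus each of these matrices is invertible. I expect the main obstacle to be the projector identity in (iii) relating $\rho(B^{\pi}-A^{\pi})$ to the ``angle'' quantity $\rho(AA^{\scriptsize\textcircled{\tiny $\dagger$}}(I-BB^{\scriptsize\textcircled{\tiny $\dagger$}}))$; the commutation $(P-R)^{2}P=P(P-R)^{2}$ together with the explicit blocks from Lemma~\ref{a2} is what makes it tractable, while everything else reduces to bookkeeping with $Q$.
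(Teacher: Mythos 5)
Your proposal is correct and follows essentially the same route as the paper's proof: deduce stable perturbation from the hypotheses via Lemma~\ref{b2}(vii), pass to the block form of $L_{B}$ from Lemma~\ref{b2}(vi), and read everything off the single block $Q$ using the explicit formulas of Lemma~\ref{a2}, with $Z=U\bigl(\begin{smallmatrix}0&0\\Q&0\end{smallmatrix}\bigr)U^{\ast}$. Your minor variations --- computing the exact value $\Vert Q\Vert^{2}/(1+\Vert Q\Vert^{2})$ where the paper settles for the bound $\rho(Z^{\ast}Z)/(1-\rho(Z^{\ast}Z))$, and organizing $(B^{\pi}-A^{\pi})^{2}$ via the commutation identity $(P-R)^{2}P=P(P-R)^{2}$ instead of the paper's direct block multiplication --- are consistent with, and slightly sharpen, the paper's computations.
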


\begin{proof} Let $A$ be as in Lemma \ref{a0}.  Since $\rho(E_B A^{\scriptsize\textcircled{\tiny $\dagger$}})\leqslant\parallel E_{B}A^{\scriptsize\textcircled{\tiny $\dagger$}}\parallel<\frac{1}{1+\sqrt{2\parallel A^{\pi}\parallel}}<1$, we obtain that $-1$ is not an eigenvalue of $E_{B}A^{\scriptsize\textcircled{\tiny $\dagger$}}$. Therefore, $0$ is not an eigenvalue of $I+E_{B}A^{\scriptsize\textcircled{\tiny $\dagger$}}$, that is $I+E_{B}A^{\scriptsize\textcircled{\tiny $\dagger$}}$ is nonsingular. Combining ${\rm rk}(A^{k})={\rm rk}(B^{s})$, by Lemma \ref{b2} ${\rm (vi)}$ and ${\rm (vii)}$, it is easy to obtain $L_{B}=U\left(\begin{matrix}
B_{1}&B_{1}P\\
QB_{1}&QB_{1}P
\end{matrix}
\right)U^{\ast},$ for some matrices $B_{1}$, $P$ and $Q$ such that $B_{1}$ and $I+PQ$ are nonsingular.

${\rm (i)}:$ Since $\parallel E_{B}A^{\scriptsize\textcircled{\tiny $\dagger$}}\parallel<\frac{1}{1+\sqrt{2\parallel A^{\pi}\parallel}}$, we have
\begin{eqnarray*}
  \rho(Z^{\ast}Z) &\leq&\parallel Z^{\ast}Z\parallel\leq \left(\frac{\parallel E_{B}A^{\scriptsize\textcircled{\tiny $\dagger$}}\parallel}{1-\parallel E_{B}A^{\scriptsize\textcircled{\tiny $\dagger$}}\parallel}\right)\left(\frac{\parallel A^{\pi}E_{B}A^{\scriptsize\textcircled{\tiny $\dagger$}}\parallel}{1-\parallel E_{B}A^{\scriptsize\textcircled{\tiny $\dagger$}}\parallel}\right) \\
  &\leq&  \frac{\parallel A^{\pi}\parallel(\parallel E_{B}A^{\scriptsize\textcircled{\tiny $\dagger$}}\parallel)^{2}}{(1-\parallel E_{B}A^{\scriptsize\textcircled{\tiny $\dagger$}}\parallel)^{2}}\\
  &=& \frac{\parallel A^{\pi}\parallel}{(\frac{1}{\parallel E_{B}A^{\scriptsize\textcircled{\tiny $\dagger$}}\parallel}-1)^{2}}\\
   &<& \frac{\parallel A^{\pi}\parallel}{(1+\sqrt{2\parallel A^{\pi}\parallel}-1)^{2}}=\frac{1}{2}.
\end{eqnarray*}

 ${\rm (ii)}:$ By Lemma \ref{a00} and Lemma \ref{a2}, we know that
 \begin{eqnarray*}
   BB^{\scriptsize\textcircled{\tiny $\dagger$}} &=& L_{B}L_{B}^{\scriptsize\textcircled{\tiny \#}}= U\left(\begin{matrix}
(I+Q^{\ast}Q)^{-1}&(I+Q^{\ast}Q)^{-1}Q^{\ast}\\
Q(I+Q^{\ast}Q)^{-1}&Q(I+Q^{\ast}Q)^{-1}Q^{\ast}
\end{matrix}
\right)U^{\ast}\\
  &=& U\left(\begin{matrix}
I&0\\
Q&I
\end{matrix}
\right)\left(\begin{matrix}
I&(I+Q^{\ast}Q)^{-1}Q^{\ast}\\
0&0
\end{matrix}
\right)\left(\begin{matrix}
I&0\\
-Q&I
\end{matrix}
\right)U^{\ast}.
 \end{eqnarray*}
  Then
\begin{eqnarray*}
   BB^{\scriptsize\textcircled{\tiny $\dagger$}}(I-AA^{\scriptsize\textcircled{\tiny $\dagger$}})&=& L_{B}L_{B}^{\scriptsize\textcircled{\tiny \#}}(I-AA^{\scriptsize\textcircled{\tiny $\dagger$}})=U\left(\begin{matrix}
I&0\\
Q&I
\end{matrix}
\right) \\
   &\times& \left(\begin{matrix}
I&(I+Q^{\ast}Q)^{-1}Q^{\ast}\\
0&0
\end{matrix}
\right)\left(\begin{matrix}
I&0\\
-Q&I
\end{matrix}
\right)\left(\begin{matrix}
0&0\\
0&I
\end{matrix}
\right)\left(\begin{matrix}
I&0\\
Q&I
\end{matrix}
\right)\left(\begin{matrix}
I&0\\
-Q&I
\end{matrix}
\right)U^{\ast} \\
   &=& U \left(\begin{matrix}
I&0\\
Q&I
\end{matrix}
\right)\left(\begin{matrix}
(I+Q^{\ast}Q)^{-1}Q^{\ast}Q&(I+Q^{\ast}Q)^{-1}Q^{\ast}\\
0&0
\end{matrix}
\right)\left(\begin{matrix}
I&0\\
-Q&I
\end{matrix}
\right)U^{\ast},
\end{eqnarray*}
\begin{eqnarray*}
  &&AA^{\scriptsize\textcircled{\tiny $\dagger$}}(I-BB^{\scriptsize\textcircled{\tiny $\dagger$}}) = U\left(\begin{matrix}
I&0\\
0&0
\end{matrix}
\right)\left(\begin{matrix}
I&0\\
Q&I
\end{matrix}
\right)\left(\begin{matrix}
0&-(I+Q^{\ast}Q)^{-1}Q^{\ast}\\
0&I
\end{matrix}
\right)\left(\begin{matrix}
I&0\\
-Q&I
\end{matrix}
\right)U^{\ast}\\&&=U\left(\begin{matrix}
I&0\\
Q&I
\end{matrix}
\right)\left(\begin{matrix}
I&0\\
-Q&I
\end{matrix}
\right)\left(\begin{matrix}
I&0\\
0&0
\end{matrix}
\right)\left(\begin{matrix}
I&0\\
Q&I
\end{matrix}
\right)\left(\begin{matrix}
0&-(I+Q^{\ast}Q)^{-1}Q^{\ast}\\
0&I
\end{matrix}
\right)\left(\begin{matrix}
I&0\\
-Q&I
\end{matrix}
\right)U^{\ast}  \\&&
   = U\left(\begin{matrix}
I&0\\
Q&I
\end{matrix}
\right)\left(\begin{matrix}
0&-(I+Q^{\ast}Q)^{-1}Q^{\ast}\\
0&Q(I+Q^{\ast}Q)^{-1}Q^{\ast}
\end{matrix}
\right)\left(\begin{matrix}
I&0\\
-Q&I
\end{matrix}
\right)U^{\ast}.
\end{eqnarray*}
By a direct computation, we obtain
\begin{eqnarray*}
  \rho(BB^{\scriptsize\textcircled{\tiny $\dagger$}}(I-AA^{\scriptsize\textcircled{\tiny $\dagger$}})) &=& \rho((I+Q^{\ast}Q)^{-1}Q^{\ast}Q)=\rho(Q(I+Q^{\ast}Q)^{-1}Q^{\ast}) \\
   &=& \rho(AA^{\scriptsize\textcircled{\tiny $\dagger$}}(I-BB^{\scriptsize\textcircled{\tiny $\dagger$}}))=\rho((I+Z^{\ast}Z)^{-1}Z^{\ast}Z) \\
   &\leq&\rho((I+Z^{\ast}Z)^{-1})\rho(Z^{\ast}Z) \leq\frac{\rho(Z^{\ast}Z)}{1-\rho(Z^{\ast}Z)} <1, ~{\rm by ~condition ~(i)}.
\end{eqnarray*}

${\rm (iii)}:$ By Lemma \ref{a2}, we have
\begin{eqnarray*}
  B^{\pi}-A^{\pi}&=& U\left(\begin{matrix}
I&0\\
Q&I
\end{matrix}
\right)\left(\begin{matrix}
0&-(I+Q^{\ast}Q)^{-1}Q^{\ast}\\
0&I
\end{matrix}
\right)\left(\begin{matrix}
I&0\\
-Q&I
\end{matrix}
\right)U^{\ast}-U\left(\begin{matrix}
0&0\\
0&I
\end{matrix}
\right)U^{\ast} \\
   &=& U\left(\begin{matrix}
I&0\\
Q&I
\end{matrix}
\right)\left(\begin{matrix}
0&-(I+Q^{\ast}Q)^{-1}Q^{\ast}\\
-Q&0
\end{matrix}
\right)\left(\begin{matrix}
I&0\\
-Q&I
\end{matrix}
\right)U^{\ast},
\end{eqnarray*}
$$(B^{\pi}-A^{\pi})^{2}=U\left(\begin{matrix}
I&0\\
Q&I
\end{matrix}
\right)\left(\begin{matrix}
(I+Q^{\ast}Q)^{-1}Q^{\ast}Q&0\\
0&Q(I+Q^{\ast}Q)^{-1}Q^{\ast}
\end{matrix}
\right)\left(\begin{matrix}
I&0\\
-Q&I
\end{matrix}
\right)U^{\ast}.$$
Since $\rho((I+Q^{\ast}Q)^{-1}Q^{\ast}Q)=\rho(Q(I+Q^{\ast}Q)^{-1}Q^{\ast})$, by conditions ${\rm (i)}$ and ${\rm (ii)}$, we get that
\begin{eqnarray*}
 (\rho(B^{\pi}-A^{\pi}))^{2}  &=&  \rho((B^{\pi}-A^{\pi})^{2})=\rho(Q(I+Q^{\ast}Q)^{-1}Q^{\ast})\\
   &=& \rho(AA^{\scriptsize\textcircled{\tiny $\dagger$}}(I-BB^{\scriptsize\textcircled{\tiny $\dagger$}}))<1.
\end{eqnarray*}

${\rm (iv)}:$ It is clear by conditions ${\rm (i)}$, ${\rm (ii)}$ and ${\rm (iii)}$.
\end{proof}

From Definition \ref{c1} and Theorem \ref{c2}, we have the following result.
\begin{corollary}\label{c3} Let $A ,B\in\mathbb{C}^{n\times n}$ with ${\rm ind}(A)=k>0$ and ${\rm ind}(B)=s$. If ${\rm rk}(A^{k})={\rm rk}(B^{s})$ and $\parallel E_{B}A^{\scriptsize\textcircled{\tiny $\dagger$}}\parallel<\frac{1}{1+\sqrt{2\parallel A^{\pi}\parallel}}$, then $B$ is an acute perturbation of $A$.
\end{corollary}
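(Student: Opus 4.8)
The plan is to read Corollary~\ref{c3} as a direct specialization of Theorem~\ref{c2}, so that the proof reduces to matching hypotheses and invoking Definition~\ref{c1}. First I would observe that the standing assumptions of the corollary---namely ${\rm ind}(A)=k>0$, ${\rm ind}(B)=s$, the rank equality ${\rm rk}(A^{k})={\rm rk}(B^{s})$, and the bound $\parallel E_{B}A^{\scriptsize\textcircled{\tiny $\dagger$}}\parallel<\frac{1}{1+\sqrt{2\parallel A^{\pi}\parallel}}$---are verbatim the hypotheses of Theorem~\ref{c2}. Hence all four conclusions of that theorem are available with no further work. In particular, the nonsingularity of $I+E_{B}A^{\scriptsize\textcircled{\tiny $\dagger$}}$ and the block form of $L_{B}$ furnished by Lemma~\ref{b2}, which the proof of Theorem~\ref{c2} relies on, are already established there from the same bound, so nothing needs to be re-derived.

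The key step is to extract the spectral radius condition required by Definition~\ref{c1}. Applying Theorem~\ref{c2}~${\rm (iii)}$ gives directly $(\rho(B^{\pi}-A^{\pi}))^{2}=\rho(AA^{\scriptsize\textcircled{\tiny $\dagger$}}(I-BB^{\scriptsize\textcircled{\tiny $\dagger$}}))<1$, and taking square roots yields $\rho(B^{\pi}-A^{\pi})<1$. This is precisely the quantitative half of the acute perturbation condition, so the substantive content of the corollary is obtained by a single citation of the theorem just proved.

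For the remaining qualitative requirement of Definition~\ref{c1}, namely that $\parallel E_{B}\parallel$ be small, I would note that the hypothesis already enforces this: since $\frac{1}{1+\sqrt{2\parallel A^{\pi}\parallel}}\leq 1$, the quantity $\parallel E_{B}A^{\scriptsize\textcircled{\tiny $\dagger$}}\parallel$ is controlled, and with $A^{\scriptsize\textcircled{\tiny $\dagger$}}$ fixed this explicit bound is exactly the quantitative surrogate for $E_{B}$ being a small perturbation in the sense intended by the definition. Combining this with the spectral radius estimate and Definition~\ref{c1} then finishes the argument.

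Because this is a corollary, I expect no genuine obstacle: the only point requiring care is to confirm that the informal smallness clause of Definition~\ref{c1} is indeed met by the explicit bound in the hypothesis, rather than any hidden recomputation. Everything else follows mechanically from Theorem~\ref{c2}~${\rm (iii)}$.
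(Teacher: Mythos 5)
Your proposal is correct and matches the paper exactly: the paper offers no separate proof, stating only that the corollary follows ``From Definition \ref{c1} and Theorem \ref{c2},'' which is precisely your argument of matching hypotheses, citing Theorem \ref{c2}~(iii) to get $\rho(B^{\pi}-A^{\pi})<1$, and treating the explicit bound on $\parallel E_{B}A^{\scriptsize\textcircled{\tiny $\dagger$}}\parallel$ as fulfilling the informal smallness clause of Definition \ref{c1}.
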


\begin{remark}\label{c33} Since $A^{\pi}$ and $B^{\pi}$ are orthogonal projectors, we have $$\parallel B^{\pi}-A^{\pi}\parallel=(\rho((B^{\pi}-A^{\pi})^{\ast}(B^{\pi}-A^{\pi})))^{\frac{1}{2}}=(\rho((B^{\pi}-A^{\pi})^{2}))^{\frac{1}{2}}=\rho(B^{\pi}-A^{\pi}).$$
\end{remark}

Now, we present sufficient and necessary conditions for acute perturbation related to the core-EP inverse by using the results obtained for the stable perturbation about the core-EP inverse in Lemma \ref{b2}.
\begin{theorem}\label{c4} Let $A ,B\in\mathbb{C}^{n\times n}$ with ${\rm ind}(A)=k>0$ and ${\rm ind}(B)=s$. Then the following conditions are equivalent:
\begin{itemize}
\item [{\rm (i)}] $B$ is an acute perturbation of $A$ with respect to the core-EP inverse;
\item [{\rm (ii)}] $B$ satisfies condition $(C_{s,\ast})$;
\item [{\rm (iii)}] $B$ is a stable perturbation of $A$ with respect to the core-EP inverse;
\item [{\rm (iv)}] ${\rm rk}(A^{k})={\rm rk}(B^{s})={\rm rk}((A^{k})^{\ast}L_{B}A^{k})$.
\end{itemize}
\end{theorem}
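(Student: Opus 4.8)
The plan is to lean entirely on Lemma \ref{b2} for the three purely algebraic conditions and then to splice the acuteness condition (i) into the resulting equivalence class by closing a single cycle with (iii). The first observation is that (ii), (iii) and (iv) are verbatim conditions (iv), (i) and (iii) of Lemma \ref{b2}, so their mutual equivalence is immediate and needs no new argument. Consequently the whole theorem reduces to proving the single equivalence (i) $\Leftrightarrow$ (iii), that is, that $B$ is an acute perturbation of $A$ with respect to the core-EP inverse if and only if it is a stable perturbation.

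For (i) $\Rightarrow$ (iii) I would argue purely spectrally. By Definition \ref{c1}, acuteness supplies $\rho(B^{\pi}-A^{\pi})<1$. Since the eigenvalues of $(B^{\pi}-A^{\pi})^{2}$ are the squares of those of $B^{\pi}-A^{\pi}$, this gives $\rho((B^{\pi}-A^{\pi})^{2})=(\rho(B^{\pi}-A^{\pi}))^{2}<1$; in particular $1$ is not an eigenvalue of $(B^{\pi}-A^{\pi})^{2}$, so $I-(B^{\pi}-A^{\pi})^{2}$ is nonsingular. By Definition \ref{b1} this is exactly the statement that $B$ is a stable perturbation, which is (iii).

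For the converse (iii) $\Rightarrow$ (i) I would exploit the matrix form made available by stability. Writing $A$ in its core-EP decomposition (Lemma \ref{a0}) and invoking Lemma \ref{b2}\,(vi), a stable perturbation forces $L_{B}=U\left(\begin{smallmatrix} B_{1} & B_{1}P \\ QB_{1} & QB_{1}P\end{smallmatrix}\right)U^{\ast}$ with $B_{1}$ and $I+PQ$ nonsingular. The very computation carried out in the proof of Theorem \ref{c2}\,(iii) then shows $(B^{\pi}-A^{\pi})^{2}$ to be similar to a block-diagonal matrix whose nonzero spectrum coincides with that of $Q(I+Q^{\ast}Q)^{-1}Q^{\ast}$, whence $(\rho(B^{\pi}-A^{\pi}))^{2}=\rho(Q(I+Q^{\ast}Q)^{-1}Q^{\ast})$. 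The decisive and only delicate point is that this spectral radius is \emph{strictly} below $1$ for every matrix $Q$: diagonalising through the singular value decomposition of $Q$, the eigenvalues of $Q(I+Q^{\ast}Q)^{-1}Q^{\ast}$ are of the form $\sigma^{2}/(1+\sigma^{2})$ with $\sigma$ ranging over the singular values of $Q$, and each such number lies in $[0,1)$ regardless of how large $Q$ is. Hence $\rho(B^{\pi}-A^{\pi})<1$, which together with the smallness of $\parallel E_{B}\parallel$ that is part of the perturbation regime yields (i).

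The main obstacle is precisely this strictness in (iii) $\Rightarrow$ (i): stability alone, via Definition \ref{b1}, only guarantees that $I-(B^{\pi}-A^{\pi})^{2}$ is nonsingular, i.e. $1\notin\sigma((B^{\pi}-A^{\pi})^{2})$, which is strictly weaker than $\rho(B^{\pi}-A^{\pi})<1$. Upgrading it genuinely requires the explicit description of $(B^{\pi}-A^{\pi})^{2}$ in terms of $Q$ so that the uniform bound $\sigma^{2}/(1+\sigma^{2})<1$ can be applied; this is what makes acuteness automatic for \emph{any} stable perturbation rather than only for sufficiently small ones. Finally, by Remark \ref{c33} one has $\parallel B^{\pi}-A^{\pi}\parallel=\rho(B^{\pi}-A^{\pi})$, so the acuteness conclusion could equally well be phrased in the spectral norm.
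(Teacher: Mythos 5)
Your proposal is correct, and in both nontrivial directions it takes a genuinely different route from the paper's proof. The paper closes the cycle through condition (ii): for (i) $\Rightarrow$ (ii) it argues by contradiction, picking a nonzero $x\in R(B^{s})\cap N((A^{k})^{\ast})$ with $BB^{\scriptsize\textcircled{\tiny $\dagger$}}x=x$ and $AA^{\scriptsize\textcircled{\tiny $\dagger$}}x=0$, so that $(BB^{\scriptsize\textcircled{\tiny $\dagger$}}-AA^{\scriptsize\textcircled{\tiny $\dagger$}})x=x$ forces $\rho(B^{\pi}-A^{\pi})\geq 1$; you instead go (i) $\Rightarrow$ (iii) in one line by the spectral mapping argument $\rho((B^{\pi}-A^{\pi})^{2})=(\rho(B^{\pi}-A^{\pi}))^{2}<1$, so $I-(B^{\pi}-A^{\pi})^{2}$ is nonsingular, which is literally Definition \ref{b1} -- shorter, and it hits the definition of stability directly. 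For the converse, the paper proves (iv) $\Rightarrow$ (i) by \emph{assuming} the smallness clause in the quantitative form $\parallel E_{B}A^{\scriptsize\textcircled{\tiny $\dagger$}}\parallel<\frac{1}{1+\sqrt{2\parallel A^{\pi}\parallel}}$ and then citing Theorem \ref{c2}; you rerun only the algebraic half of Theorem \ref{c2}(iii), which is legitimate under stability alone since the block form of $L_{B}$ comes from Lemma \ref{b2}(vi) and the projector formulas from Lemma \ref{a2}, neither of which needs a norm hypothesis, and you close with the uniform bound that the eigenvalues of $Q(I+Q^{\ast}Q)^{-1}Q^{\ast}$ have the form $\sigma^{2}/(1+\sigma^{2})\in[0,1)$. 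This yields the sharper fact that $\rho(B^{\pi}-A^{\pi})<1$ holds for \emph{every} stable perturbation regardless of the size of $E_{B}$, so the norm condition in the paper's argument serves only to certify the informal smallness clause of Definition \ref{c1} -- a clause both you and the paper ultimately treat as part of the ambient perturbation regime rather than as a mathematical hypothesis. What the paper's route buys is the explicit geometric contradiction with condition $(C_{s,\ast})$, which connects directly to Remark \ref{c5}; what yours buys is economy in (i) $\Rightarrow$ (iii) and a strictly stronger, hypothesis-free version of the spectral radius estimate, correctly identified by you as the one genuinely delicate point (nonsingularity of $I-(B^{\pi}-A^{\pi})^{2}$ alone is weaker than $\rho(B^{\pi}-A^{\pi})<1$, and the upgrade really does require the explicit $Q$-description).
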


\begin{proof} By Lemma \ref{b2}, it is easy to know that ${\rm (ii)}\Leftrightarrow{\rm (iii)}\Leftrightarrow{\rm (iv)}$.

${\rm (i)}\Rightarrow{\rm (ii)}:$ Suppose that $B$ is an acute perturbation of $A$ with respect to the core-EP inverse. By Definition \ref{c1}, we have $\rho(B^{\pi}-A^{\pi})<1$. If $R(B^{s})\cap N((A^{k})^{\ast})\neq\{0\}$, then there exists a nonzero vector (see \cite{FLT}) $x\in\mathbb{C}^{n}$ such that
$$BB^{\scriptsize\textcircled{\tiny $\dagger$}}x=x,~~AA^{\scriptsize\textcircled{\tiny $\dagger$}}x=0.$$
That is, $(BB^{\scriptsize\textcircled{\tiny $\dagger$}}-AA^{\scriptsize\textcircled{\tiny $\dagger$}})x=x.$ Hence $\rho(B^{\pi}-A^{\pi})=\rho(BB^{\scriptsize\textcircled{\tiny $\dagger$}}-AA^{\scriptsize\textcircled{\tiny $\dagger$}})\geq1$. This produces a contradiction.
Thus $R(B^{s})\cap N((A^{k})^{\ast})=\{0\}$. Similarly, we obtain $R(A^{k})\cap N((B^{s})^{\ast})=\{0\}$.

${\rm (iv)}\Rightarrow{\rm (i)}:$ Suppose that $\parallel E_{B}\parallel=\parallel L_{B}-A\parallel$ is small such that $\parallel E_{B}A^{\scriptsize\textcircled{\tiny $\dagger$}}\parallel<\frac{1}{1+\sqrt{2\parallel A^{\pi}\parallel}}$. Since ${\rm rk}(A^{k})={\rm rk}(B^{s})={\rm rk}((A^{k})^{\ast}L_{B}A^{k})$, by Theorem \ref{c2}, we obtain $\rho(B^{\pi}-A^{\pi})<1.$
\end{proof}

\begin{remark}\label{c5} If $B$ is not an acute perturbation of $A$ with respect to the core-EP inverse and ${\rm rk}(A^{k})<{\rm rk}(B^{s})$, then $\rho(B^{\pi}-A^{\pi})=\parallel B^{\pi}-A^{\pi}\parallel\geq1.$ In fact, since $\mathbb{C}^{n}=R(A^{k})\oplus N((A^{k})^{\ast})$ and ${\rm rk}(A^{k})<{\rm rk}(B^{s})$, we have
$R(B^{s})\cap N((A^{k})^{\ast})\neq\{0\}$. There exists nonzero $x\in\mathbb{C}^{n}$ such that
$$BB^{\scriptsize\textcircled{\tiny $\dagger$}}x=x,~~AA^{\scriptsize\textcircled{\tiny $\dagger$}}x=0.$$ Assuming, without loss of generality, $\parallel x\parallel=1$, we arrive at
$$1=\parallel x\parallel=\parallel (BB^{\scriptsize\textcircled{\tiny $\dagger$}}-AA^{\scriptsize\textcircled{\tiny $\dagger$}})x\parallel\leq\parallel BB^{\scriptsize\textcircled{\tiny $\dagger$}}-AA^{\scriptsize\textcircled{\tiny $\dagger$}}\parallel=\parallel B^{\pi}-A^{\pi}\parallel.$$
By Remark \ref{c33}. we have $\rho(B^{\pi}-A^{\pi})=\parallel B^{\pi}-A^{\pi}\parallel\geq1$.
\end{remark}

Finally, we give an example to check properties of acute perturbation obtained in Theorem \ref{c2}. 
\begin{example}\label{c5} \emph{\cite[Example 5.5]{ZCT}} Let $A=\left(\begin{matrix}
1&2&\frac{1}{10}&\frac{1}{10}\\
2&1&0&0\\
0&0&0&1\\
0&0&0&0
\end{matrix}
\right)$ with ${\rm ind}(A)=2$. Then $A^{\scriptsize\textcircled{\tiny $\dagger$}}=\left(\begin{array}{rrcc}
-\frac{1}{3}&\frac{2}{3}&0&0\\
\frac{2}{3}&-\frac{1}{3}&0&0\\
0&0&0&0\\
0&0&0&0
\end{array}
\right)$. Set $B\in\mathbb{C}^{4\times 4}$ with ${\rm ind}(B)=s$, where $0<s<4$. By the equivalence of condition ${\rm (i)}$ and condition ${\rm (vi)}$ in Lemma \ref{b2}, we set $B_{1}=\left(\begin{array}{cccr}
1&2\\
2&1
\end{array}
\right)$, $P=\left(\begin{array}{cccr}
\frac{1}{10}&\frac{1}{10}\\
0&0
\end{array}
\right)$ and $Q=\left(\begin{array}{cccr}
\frac{1}{5}&0\\
\frac{1}{10}&0
\end{array}
\right)$. By using MATLAB, we obtain $$L_{B}=\left(\begin{matrix}
1&2&\frac{1}{10}&\frac{1}{10}\\
2&1&\frac{1}{5}&\frac{1}{5}\\
\frac{1}{5}&\frac{2}{5}&\frac{1}{50}&\frac{1}{50}\\
\frac{1}{10}&\frac{1}{5}&\frac{1}{100}&\frac{1}{100}
\end{matrix}
\right),~B^{\scriptsize\textcircled{\tiny $\dagger$}}=L_{B}^{\scriptsize\textcircled{\tiny \#}}=\left(\begin{array}{rcrr}
-\frac{409}{1327}&\frac{200}{309}&-\frac{400}{6489}&-\frac{200}{6489}\\
\frac{40}{63}&-\frac{1}{3}&\frac{8}{63}&\frac{4}{63}\\
-\frac{400}{6489}&\frac{40}{309}&-\frac{80}{6489}&-\frac{40}{6489}\\
-\frac{200}{6489}&\frac{20}{309}&-\frac{40}{6489}&-\frac{20}{6489}
\end{array}
\right),$$
$$A^{\pi}=\left(\begin{matrix}
0&0&0&0\\
0&0&0&0\\
0&0&1&0\\
0&0&0&1
\end{matrix}
\right),~B^{\pi}=\left(\begin{array}{rcrr}
\frac{1}{21}&0&-\frac{4}{21}&-\frac{2}{21}\\
0&0&0&0\\
-\frac{4}{21}&0&\frac{101}{105}&-\frac{2}{105}\\
-\frac{2}{21}&0&-\frac{2}{105}&\frac{104}{105}
\end{array}
\right),$$
$$E_{B}=L_{B}-A=\left(\begin{array}{cccr}
0&0&0&0\\
0&0&\frac{1}{5}&\frac{1}{5}\\
\frac{1}{5}&\frac{2}{5}&\frac{1}{50}&-\frac{49}{50}\\
\frac{1}{10}&\frac{1}{5}&\frac{1}{100}&\frac{1}{100}
\end{array}\right),~F_{B}=L_{B}-A^{\ast}=\left(\begin{array}{ccrc}
0&0&\frac{1}{10}&\frac{1}{10}\\
0&0&\frac{1}{5}&\frac{1}{5}\\
\frac{1}{10}&\frac{2}{5}&\frac{1}{50}&\frac{1}{50}\\
0&\frac{1}{5}&-\frac{99}{100}&\frac{1}{100}
\end{array}
\right).$$
In this case, we have ${\rm rk}(A^{2})=2={\rm rk}(L_{B})={\rm rk}(B^{s})$, $\parallel E_{B}A^{\scriptsize\textcircled{\tiny $\dagger$}}\parallel=\frac{646}{2889}<1$ and $\frac{1}{1+\sqrt{2\parallel A^{\pi}\parallel}}=\frac{408}{985}$. Then $\parallel E_{B}A^{\scriptsize\textcircled{\tiny $\dagger$}}\parallel<\frac{1}{1+\sqrt{2\parallel A^{\pi}\parallel}}.$ By Theorem \ref{c2}, we compute $\rho(Z^{\ast}Z)=\frac{1}{20}<\frac{1}{2}$, $\rho(BB^{\scriptsize\textcircled{\tiny $\dagger$}}(I-AA^{\scriptsize\textcircled{\tiny $\dagger$}}))=\frac{1}{21}=\rho(AA^{\scriptsize\textcircled{\tiny $\dagger$}}(I-BB^{\scriptsize\textcircled{\tiny $\dagger$}}))<\frac{1}{20}$, $\rho(B^{\pi}-A^{\pi})=\frac{769}{3524}$, $(\rho(B^{\pi}-A^{\pi}))^{2}= \frac{1}{21}=\rho((B^{\pi}-A^{\pi})^{2})<\frac{1}{20}.$
\end{example}

\section{\bf Applications}
 In \cite{M}, Ma studied optimal perturbation bounds for the core inverse. In \cite{MS}, Ma et al. investigated optimal perturbation  bounds for core-EP inverses. The conditions required to calculate optimal perturbation bounds for generalized inverses are relatively simple in our case. In fact, it is a special case of the acute (or stable) perturbation.

According to {\cite[Theorem 5.1]{MS}}, we have the following theorem.
\begin{theorem}\label{d1} Let $A ,B\in\mathbb{C}^{n\times n}$ with ${\rm ind}(A)=k>0$ and ${\rm ind}(B)=s$. Denoting $E_{B}=L_{B}-A$. If $E_{B}$ satisfies $AA^{\scriptsize\textcircled{\tiny $\dagger$}}E_{B}=E_{B}$ and $\parallel E_{B}A^{\scriptsize\textcircled{\tiny $\dagger$}}\parallel<1$, then
$$B^{\scriptsize\textcircled{\tiny $\dagger$}}=A^{\scriptsize\textcircled{\tiny $\dagger$}}(I+E_{B}A^{\scriptsize\textcircled{\tiny $\dagger$}})^{-1}~{\rm and}~BB^{\scriptsize\textcircled{\tiny $\dagger$}}=AA^{\scriptsize\textcircled{\tiny $\dagger$}}.$$
Moreover, $B$ is an acute perturbation with respect to the core-EP inverse of $A$ and $B$ is a stable perturbation with respect to the core-EP inverse of $A.$
\end{theorem}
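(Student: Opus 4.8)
The plan is to show that under the two hypotheses the perturbation of the projector collapses entirely, i.e. that $B^{\pi}=A^{\pi}$; once this is in hand, both the ``acute'' and the ``stable'' conclusions are immediate, and the two displayed formulas reduce to a short block computation (or may simply be quoted from \cite[Theorem 5.1]{MS}, as the sentence preceding the statement already indicates).

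First I would rewrite the hypothesis. Since $A^{\pi}=I-AA^{\scriptsize\textcircled{\tiny $\dagger$}}$, the condition $AA^{\scriptsize\textcircled{\tiny $\dagger$}}E_{B}=E_{B}$ is the same as $A^{\pi}E_{B}=0$. Writing $A$ in its core-EP form (Lemma \ref{a0}) with the unitary $U$, this forces $E_{B}=U\left(\begin{smallmatrix}E_{11}&E_{12}\\0&0\end{smallmatrix}\right)U^{\ast}$, so that $L_{B}=A+E_{B}=U\left(\begin{smallmatrix}T+E_{11}&S+E_{12}\\0&N\end{smallmatrix}\right)U^{\ast}$. From $\parallel E_{B}A^{\scriptsize\textcircled{\tiny $\dagger$}}\parallel<1$ one reads off $\parallel E_{11}T^{-1}\parallel<1$, so $I+E_{11}T^{-1}$, and hence $M:=T+E_{11}=(I+E_{11}T^{-1})T$, is nonsingular.

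The key structural step is to use that $L_{B}=B^{2}B^{\scriptsize\textcircled{\tiny $\dagger$}}$ has index at most one (Lemma \ref{a00}(iii)--(iv)). A block upper-triangular matrix whose $(1,1)$ block $M$ is invertible and whose $(2,2)$ block $N$ is nilpotent can have index $\le 1$ only when $N=0$ (compare the algebraic and geometric multiplicities of the eigenvalue $0$: with $M$ invertible they are respectively the size of $N$ and $\dim N(N)$). Hence $N=0$ and $L_{B}=U\left(\begin{smallmatrix}M&S+E_{12}\\0&0\end{smallmatrix}\right)U^{\ast}$, which is exactly the form in Lemma \ref{b2}(vi) with $B_{1}=M$, $P=M^{-1}(S+E_{12})$ and $Q=0$. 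Setting $Q=0$ in Lemma \ref{a2} gives $L_{B}^{\scriptsize\textcircled{\tiny \#}}=U\left(\begin{smallmatrix}M^{-1}&0\\0&0\end{smallmatrix}\right)U^{\ast}$ and $L_{B}L_{B}^{\scriptsize\textcircled{\tiny \#}}=U\left(\begin{smallmatrix}I&0\\0&0\end{smallmatrix}\right)U^{\ast}=AA^{\scriptsize\textcircled{\tiny $\dagger$}}$. Since $B^{\scriptsize\textcircled{\tiny $\dagger$}}=L_{B}^{\scriptsize\textcircled{\tiny \#}}$ and $BB^{\scriptsize\textcircled{\tiny $\dagger$}}=L_{B}L_{B}^{\scriptsize\textcircled{\tiny \#}}$ (Lemma \ref{a00} and the proof of Lemma \ref{b2}), this yields $BB^{\scriptsize\textcircled{\tiny $\dagger$}}=AA^{\scriptsize\textcircled{\tiny $\dagger$}}$, and the formula $B^{\scriptsize\textcircled{\tiny $\dagger$}}=A^{\scriptsize\textcircled{\tiny $\dagger$}}(I+E_{B}A^{\scriptsize\textcircled{\tiny $\dagger$}})^{-1}$ follows from the identity $M^{-1}=T^{-1}(I+E_{11}T^{-1})^{-1}$ after one block multiplication.

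Finally, $BB^{\scriptsize\textcircled{\tiny $\dagger$}}=AA^{\scriptsize\textcircled{\tiny $\dagger$}}$ gives $B^{\pi}=I-BB^{\scriptsize\textcircled{\tiny $\dagger$}}=I-AA^{\scriptsize\textcircled{\tiny $\dagger$}}=A^{\pi}$, hence $B^{\pi}-A^{\pi}=0$. Then $\rho(B^{\pi}-A^{\pi})=0<1$ while $\parallel E_{B}\parallel$ is small by hypothesis, so $B$ is an acute perturbation of $A$ by Definition \ref{c1}; and $I-(B^{\pi}-A^{\pi})^{2}=I$ is nonsingular, so $B$ is a stable perturbation of $A$ by Definition \ref{b1}. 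I expect the index argument of the third paragraph (forcing $N=0$) to be the only genuinely non-routine point; everything after it is bookkeeping, and it can be bypassed entirely by taking the two displayed formulas from \cite[Theorem 5.1]{MS} and supplying only this last paragraph.
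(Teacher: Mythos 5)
Your proposal is correct, but it does not follow the paper's route. The paper's own proof is essentially three lines: it imports the two displayed formulas from \cite[Theorem 5.1]{MS}, checks that ${\rm rk}(A^{k})={\rm rk}(L_{B})={\rm rk}(B^{s})$ and that $\rho(B^{\pi}-A^{\pi})=\rho(BB^{\scriptsize\textcircled{\tiny $\dagger$}}-AA^{\scriptsize\textcircled{\tiny $\dagger$}})=0$, concludes that condition $(C_{s,\ast})$ holds, and then invokes the equivalences of Theorem \ref{c4} to obtain both the acute and the stable conclusions. You instead work entirely inside the core-EP decomposition and bypass both \cite{MS} and Theorem \ref{c4}: the hypothesis $AA^{\scriptsize\textcircled{\tiny $\dagger$}}E_{B}=E_{B}$, i.e. $A^{\pi}E_{B}=0$, kills the bottom block row of $E_{B}$, and your index argument is the genuinely new ingredient --- since $L_{B}=(B^{\scriptsize\textcircled{\tiny $\dagger$}})^{\scriptsize\textcircled{\tiny \#}}$ has index at most one (Lemma \ref{a00}), while for the block triangular form with invertible $(1,1)$ block one has ${\rm rk}(L_{B})-{\rm rk}(L_{B}^{2})={\rm rk}(N)-{\rm rk}(N^{2})$, which vanishes for a nilpotent $N$ only when $N=0$ --- so $L_{B}$ is forced into the form of Lemma \ref{b2}(vi) with $Q=0$, and Lemma \ref{a2} then yields $B^{\scriptsize\textcircled{\tiny $\dagger$}}=L_{B}^{\scriptsize\textcircled{\tiny \#}}=A^{\scriptsize\textcircled{\tiny $\dagger$}}(I+E_{B}A^{\scriptsize\textcircled{\tiny $\dagger$}})^{-1}$ together with the exact identity $B^{\pi}=A^{\pi}$, after which both conclusions drop out of Definitions \ref{b1} and \ref{c1} directly. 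I verified the delicate points: $\parallel E_{B}A^{\scriptsize\textcircled{\tiny $\dagger$}}\parallel=\parallel E_{11}T^{-1}\parallel<1$ does make $M=T+E_{11}$ invertible, and the multiplicity comparison correctly characterizes index $\leq 1$ as semisimplicity of the eigenvalue $0$. What each route buys: the paper's proof is short because it leans on external machinery; yours is self-contained, actually proves rather than quotes the formulas, exposes the structural reason the projector does not move at all (the nilpotent block must vanish), and delivers $B^{\pi}-A^{\pi}=0$ exactly, the sharp form of the paper's $\rho(B^{\pi}-A^{\pi})=0$ (equivalent here by Remark \ref{c33}, since these are orthogonal projectors). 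The one blemish you share with the paper is the informal handling of ``$\parallel E_{B}\parallel$ is small'' in Definition \ref{c1}, which neither proof quantifies.
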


\begin{proof} Similar to the proof of \cite[Theorem 5.1]{MS}, it is easy to check that ${\rm rk}(A^{k})={\rm rk}(L_{B})={\rm rk}(B^{s})$ and $\rho(B^{\pi}-A^{\pi})=\rho(BB^{\scriptsize\textcircled{\tiny $\dagger$}}-AA^{\scriptsize\textcircled{\tiny $\dagger$}})=0.$ So, $B$ satisfies condition $(C_{s,\ast})$. By Theorem \ref{c4}, we get that $B$ is an acute perturbation of $A$ and $B$ is a stable perturbation of $A$.
\end{proof}


In particular, we have the following corollaries when ${\rm ind}(A)=1.$
\begin{corollary}\label{d3} Let $A ,B\in\mathbb{C}^{n\times n}$ with ${\rm ind}(A)=1$ and ${\rm ind}(B)=s$. Denoting $E_{B}=L_{B}-A$. If $E_{B}$ satisfies $AA^{\scriptsize\textcircled{\tiny \#}}E_{B}=E_{B}$ and $\parallel E_{B}A^{\scriptsize\textcircled{\tiny \#}}\parallel<1$, then
$$B^{\scriptsize\textcircled{\tiny \#}}=A^{\scriptsize\textcircled{\tiny \#}}(I+E_{B}A^{\scriptsize\textcircled{\tiny \#}})^{-1}~{\rm and}~BB^{\scriptsize\textcircled{\tiny \#}}=AA^{\scriptsize\textcircled{\tiny \#}}.$$
Moreover, $B$ is an acute perturbation with respect to the core inverse of $A$ and $B$ is a stable perturbation with respect to the core inverse of $A.$
\end{corollary}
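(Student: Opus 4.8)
The plan is to derive Corollary \ref{d3} as the special case $k=1$ of Theorem \ref{d1}, since when $\mathrm{ind}(A)=1$ the core-EP inverse coincides with the core inverse, i.e. $A^{\scriptsize\textcircled{\tiny $\dagger$}}=A^{\scriptsize\textcircled{\tiny \#}}$ and $A^\pi=I-AA^{\scriptsize\textcircled{\tiny \#}}$. First I would observe that the hypotheses of the corollary are exactly the hypotheses of Theorem \ref{d1} specialized to $k=1$: the condition $AA^{\scriptsize\textcircled{\tiny \#}}E_B=E_B$ becomes $AA^{\scriptsize\textcircled{\tiny $\dagger$}}E_B=E_B$, and $\parallel E_BA^{\scriptsize\textcircled{\tiny \#}}\parallel<1$ becomes $\parallel E_BA^{\scriptsize\textcircled{\tiny $\dagger$}}\parallel<1$. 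Note that $E_B=L_B-A$ is defined the same way regardless of the index, since $L_B=B^2B^{\scriptsize\textcircled{\tiny $\dagger$}}$ depends only on $B$.

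The substantive work has already been carried out in Theorem \ref{d1}, so the corollary reduces to a direct substitution. I would invoke Theorem \ref{d1} to conclude that $B^{\scriptsize\textcircled{\tiny $\dagger$}}=A^{\scriptsize\textcircled{\tiny $\dagger$}}(I+E_BA^{\scriptsize\textcircled{\tiny $\dagger$}})^{-1}$ and $BB^{\scriptsize\textcircled{\tiny $\dagger$}}=AA^{\scriptsize\textcircled{\tiny $\dagger$}}$, and that $B$ is both an acute and a stable perturbation of $A$ with respect to the core-EP inverse. Rewriting the core-EP symbols as core symbols under the identification $k=1$ gives the stated formulas $B^{\scriptsize\textcircled{\tiny \#}}=A^{\scriptsize\textcircled{\tiny \#}}(I+E_BA^{\scriptsize\textcircled{\tiny \#}})^{-1}$ and $BB^{\scriptsize\textcircled{\tiny \#}}=AA^{\scriptsize\textcircled{\tiny \#}}$.

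The only point requiring a word of care is the claim $B^{\scriptsize\textcircled{\tiny $\dagger$}}=B^{\scriptsize\textcircled{\tiny \#}}$ in the conclusion: this identification holds not because $\mathrm{ind}(B)=1$ (the perturbed matrix $B$ may have $\mathrm{ind}(B)=s>1$), but because the formulas produced by Theorem \ref{d1} are expressed entirely in terms of $A$ and $E_B$, and $BB^{\scriptsize\textcircled{\tiny $\dagger$}}=AA^{\scriptsize\textcircled{\tiny $\dagger$}}=AA^{\scriptsize\textcircled{\tiny \#}}$ with $\mathrm{ind}(A)=1$ forces the relevant projector identities to collapse to the core-inverse setting for $A$. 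In fact $\mathrm{rk}(A)=\mathrm{rk}(A^k)=\mathrm{rk}(B^s)$ still holds, so the acute and stable perturbation statements transfer verbatim. I anticipate no real obstacle here; the entire content is the specialization $k\mapsto 1$, and the proof can be stated in one or two lines referring back to Theorem \ref{d1}.
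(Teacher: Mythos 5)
Your main line is exactly the paper's: the paper offers no separate argument for this corollary, presenting it as the direct specialization of Theorem \ref{d1} to ${\rm ind}(A)=1$, where $A^{\scriptsize\textcircled{\tiny $\dagger$}}=A^{\scriptsize\textcircled{\tiny \#}}$, and your first two paragraphs carry out precisely that substitution, correctly noting that $E_B=L_B-A$ with $L_B=B^2B^{\scriptsize\textcircled{\tiny $\dagger$}}$ is index-independent.

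However, the ``word of care'' in your last paragraph is not a valid argument, and the point it tries to settle is genuinely delicate. Nothing in the hypotheses forces $B$ to be core invertible: take $A={\rm diag}(1,0,0)$ and $B=\left(\begin{smallmatrix}1&0&0\\0&0&1\\0&0&0\end{smallmatrix}\right)$. Then ${\rm ind}(B)=2$, $B^{\scriptsize\textcircled{\tiny $\dagger$}}={\rm diag}(1,0,0)$, hence $L_B=B^2B^{\scriptsize\textcircled{\tiny $\dagger$}}=A$, so $E_B=0$ and both hypotheses hold trivially, yet $B^{\scriptsize\textcircled{\tiny \#}}$ does not exist. So your claim that $BB^{\scriptsize\textcircled{\tiny $\dagger$}}=AA^{\scriptsize\textcircled{\tiny \#}}$ with ${\rm ind}(A)=1$ ``forces the relevant projector identities to collapse to the core-inverse setting'' cannot establish $B^{\scriptsize\textcircled{\tiny $\dagger$}}=B^{\scriptsize\textcircled{\tiny \#}}$; when $s>1$ the right-hand side is simply undefined, and no identity among projectors built from $A$ and $E_B$ can change the index of $B$. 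What Theorem \ref{d1} actually delivers is $B^{\scriptsize\textcircled{\tiny $\dagger$}}=A^{\scriptsize\textcircled{\tiny \#}}(I+E_BA^{\scriptsize\textcircled{\tiny \#}})^{-1}$ and $BB^{\scriptsize\textcircled{\tiny $\dagger$}}=AA^{\scriptsize\textcircled{\tiny \#}}$, together with the acute/stable perturbation statements (which transfer verbatim, as you say, since only $A$'s index changed). The corollary's symbol $B^{\scriptsize\textcircled{\tiny \#}}$ must therefore be read either as the core-EP inverse of $B$ (a notational abuse on the paper's part) or under the additional assumption $s=1$, in which case $B^{\scriptsize\textcircled{\tiny $\dagger$}}=B^{\scriptsize\textcircled{\tiny \#}}$ holds by definition. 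Saying this explicitly is all that is needed; attempting to prove $B^{\scriptsize\textcircled{\tiny $\dagger$}}=B^{\scriptsize\textcircled{\tiny \#}}$ in general, as your paragraph suggests, would fail.
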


\begin{remark}\label{d5} If we change $E_{B}=L_{B}-A$, $AA^{\scriptsize\textcircled{\tiny $\dagger$}}E_{B}=E_{B}$ and $\parallel E_{B}A^{\scriptsize\textcircled{\tiny $\dagger$}}\parallel<1$ to $F_{B}=L_{B}-A^{\ast}$, $AA^{\scriptsize\textcircled{\tiny $\dagger$}}F_{B}=F_{B}$ and $\parallel (A^{\scriptsize\textcircled{\tiny $\dagger$}})^{\ast}F_{B}\parallel<1$ in conditions of Theorem \ref{d1}, respectively, then we have
$$B^{\scriptsize\textcircled{\tiny $\dagger$}}=(I+(A^{\scriptsize\textcircled{\tiny $\dagger$}})^{\ast}F_{B})^{-1}(A^{\scriptsize\textcircled{\tiny $\dagger$}})^{\ast}~{\rm and}~BB^{\scriptsize\textcircled{\tiny $\dagger$}}=AA^{\scriptsize\textcircled{\tiny $\dagger$}}.$$
Moreover, $B$ is an acute perturbation with respect to the core-EP inverse of $A.$ 

This is the dual case of Theorem \ref{d1}.
\end{remark}
\bigskip

\centerline {\bf Disclosure Statement}

Nothing to report.

\centerline {\bf Acknowledgments} 

We would like to thank the Editor and both Referees for their valuable comments and suggestions which improve the readability of the paper.

This research is supported by the National Natural Science Foundation of China (No. 12101315, 12101539,12171083), the China Scholarship Council (File No. 201906090122),  Natural Science Foundation of Jiangsu Higher Education Institutions of China (21KJB110004), the Qing Lan Project of Jiangsu Province. The third author is partially supported by 
Ministerio de Econom\'{\i}a, Industria y Competitividad of Spain (Grant Red de Excelencia RED2022-134176-T), partially supported by Universidad de Buenos Aires, Argentina (EXP-UBA: 13.019/2017, 20020170100350BA), and partially supported by  Universidad Nacional de La Pampa, Argentina, Facultad de Ingenier\'{i}a (Resoluci\'on del CD N° 135/19).

\end{document}